\newtheorem{hypothesis}{Hypothesis}
\newtheorem{theorem}{Theorem}
\newtheorem{proposition}[theorem]{Proposition}
\newtheorem{lemma}[theorem]{Lemma}
\newtheorem{definition}{Definition}
\newtheorem{remark}{Remark}
\begin{document}

{\centering%
\noindent\begin{tabular}{|l|}
\hline
L. Sanz, R. Bravo de la Parra, M. Marv\'a, E. S\'anchez. Non-linear population\\
discrete models with two time scales: re-scaling of part of the slow process.\\
Advances in Difference Equations, 2019(1), 401., 2019.\\
http://doi.org/10.1186/s13662-019-2303-1\\
\hline
\end{tabular}
}%

\title{Non-linear population discrete models with two time scales: re-scaling of part of the slow process}
\author[1]{Luis Sanz\thanks{luis.sanz@upm.es}}
\author[2]{Rafael Bravo de la Parra\thanks{rafael.bravo@uah.es}}
\author[3]{Marcos Marv\'a\thanks{marcos.marva@uah.es}}
\author[4]{Eva S\'anchez\thanks{evamaria.sanchez@upm.es}}

\affil[1]{Depto. Matem\'aticas, E.T.S.I Industriales, Technical University of Madrid, Madrid, Spain.}
\affil[2]{U.D. Matem\'aticas, Universidad de Alcalá, Alcal\'a, Spain}
\affil[3]{U.D. Matem\'aticas, Universidad de Alcalá, Alcal\'a, Spain}
\affil[4]{Depto. Matem\'aticas, E.T.S.I Industriales, Technical University of Madrid, Madrid, Spain.}
\date{}

\maketitle

\begin{abstract}
In this work we present a reduction result for discrete time systems with two time scales. In order to be valid, previous results in the field require some strong hypotheses that are difficult to check in practical applications. Roughly speaking, the iterates of a map as well as their differentials must converge uniformly on compact sets. Here, we eliminate the hypothesis of uniform convergence of the differentials at no significant cost in the conclusions of the result.
This new result is then used to extend to nonlinear cases the reduction of some population discrete models involving processes acting at different time scales. In practical cases, some processes that occur at a fast time scale are often only measured at slow time intervals, notably mortality. For a general class of linear models that include such kind of processes, it has been shown that a more realistic approach requires the re-scaling of those processes to be considered at the fast time scale. We develop the same type of re-scaling in some nonlinear models and prove the corresponding reduction results. We also provide an application to a particular model of a structured population in a two-patch environment.
\end{abstract}
%
{\bf keywords:}discrete-time system, time scales, re-scaling, structured population.

\section{Introduction\label{sec1}}

We consider discrete systems in the framework of population dynamics models.
The complexity of these models can be treated distinguishing the different
time scales at which the different processes involved act. In an idealization
of this approach we proposed (see \cite{Bravo13} and references therein) to
merge two different processes acting at different time scales in a single
model as we describe next. The effect of the fast process during a fast time
unit is represented by a general map $F$, and a general map $S$ describes the
slow process in a slow time unit. We choose this latter as the time unit of
the common discrete model. If the slow time unit is approximately $k$ times
larger than the fast one, we consider that during this interval the fast
process acts sequentially $k$ times followed by the slow process acting once.
The combined effect of both processes during a slow time unit is therefore
represented by the composition of map $S$ and the k-th iterate $F^{(k)}$ of
map $F$. If we let vector $X(t)$ represent the population state at time $t$,
the general form of the system is:
\begin{equation}
X(t+1)=S\left(  F^{(k)}\left(  X(t)\rule{0ex}{2ex}\right)  \right)
\label{1-sg}%
\end{equation}
The subsequent issue is to take advantage of the existence of time scales to
reduce the proposed system. The reduction we are referring to can be
considered to be part of the so-called methods of aggregation of variables,
that exist in different mathematical settings \cite{Auger08a,Auger08b}. It
consists on finding a certain number of global variables, which are functions
of the state variables, and a reduced system, approximately describing their
dynamics, such that it is possible to get information on the asymptotic
behaviour of the solutions of the original system in terms of this reduced
system. This procedure has a direct interpretation in terms of ecological
hierarchy theory \cite{Auger03} and the concept of up-scaling through
ecological hierarchical levels \cite{Lischke07,Levin92}. In general,
aggregation methods are approximate in the sense that quantitatively the
dynamics of the original system can not be described exactly by the dynamics
of the reduced one. However, the error incurred decreases when the ratio of
time scales increases and, for a large enough value of this ratio, the two
systems have the same qualitative dynamics.

In \cite{Sanz08} it is proved that system \eqref{1-sg} can be reduced if the
limit of the iterates of map $F$ exists and can be expressed as the
composition of two maps going through a lower dimensional space. With
additional hypotheses it is possible to extract information on the asymptotic
behaviour of the solutions of system \eqref{1-sg} by means of the reduced
system. To be specific, one can study the existence, stability and basins of
attraction of steady states and periodic solutions of the original system by
performing the study for the corresponding aggregated system. The additional
hypotheses essentially consist in the convergence on compact sets of the $F$
iterates and their differentials. However, this latter assumption is generally
too involved, if not impossible, to be proved in particular applications.

The main result in this work proves that the mere convergence on compact sets
of the $F$ iterates is enough to obtain almost the same results as in
\cite{Sanz08}. The downside is that we cannot guarantee the convergence of the
system dynamics to the attractor, equilibrium or periodic solution, obtained
through the analysis of the reduced system. However, we prove that this
dynamics remains as close to the attractor as desired if the ratio of time
scales is sufficiently large. Therefore, from the point of view of population
dynamics models this last property is good enough to obtain valuable
qualitative results.

This reduction result is applied in the context of discrete-time models of
structured metapopulations with two time scales. The aim is to extend to the
nonlinear case certain results obtained in \cite{Nguyen11} for linear models.
In structured metapopulation models that distinguish time scales, biological
evidence suggest associating the slow one to the local demography (maturation,
survival, reproduction) and the fast one to the movements between patches.
When we express this in the form of system \eqref{1-sg} we are representing
that individuals perform at first a series of $k$ dispersal events between
patches followed by a demographic event in the arrival patch. This way of
separating slow and fast processes seems acceptable as far as reproduction is
concerned, particularly in the case of populations with
non-overlapping generations. Nevertheless, it is arguable whether survival
should be considered at the slow time scale because deaths may occur at any
moment of a slow time interval, i.e., in any of the patches through which
individuals pass during this interval. In order to include this issue in the
model, and having in mind that survival data may only exist for slow time
units, in \cite{Nguyen11} it is proposed to move survival from the slow to the
fast process by approximating its effect during the fast time unit, i.e.,
\textit{re-scaling} survival to the fast time scale.

According to this discussion, we present two general discrete-time models of
structured metapopulations with two time scales. In the first one we consider
the effect of survival in the time step corresponding to the slow process
and, in the second one, in that corresponding to the fast process. The
main result of this work makes it possible to extend the reduction method to
the latter case.

This general framework is used to illustrate the effects of fast dispersals on
local demography that emerge at the global metapopulation level. To this end,
we extend the non-spatial model presented in \cite{Veprauskas17} to a habitat
with two patches between which fast dispersals are considered. The model is
structured into three classes: juveniles and two adult stages. Adults that
reproduce at the end of an interval of time do not reproduce at the end of the
following one. A fraction of those not having reproduced do so at the end of
the following interval. Thus, adults are classified into those reproducing at
the end of the time interval, active adults, and those who do not, inactive
adults. The aim of this consideration is studying the reproductive synchrony
of adults. The model in \cite{Veprauskas17} is a discrete time nonlinear
matrix model whose inherent projection matrix is imprimitive \cite{Cushing15}.
This entails that the stability of the extinction equilibrium when $R_{0}$
increases through 1 bifurcates either to the stability of a non-extinction
equilibrium or of a synchronous 2-cycle. The first option represents adults
reproductive asynchrony whereas the second one is associated to reproductive synchrony.

Both metapopulation models, with or without re-scaled survival, and with local
dynamics based on the model \cite{Veprauskas17}, can be studied through their
associated reduced systems. Indeed, these have the same functional form as the
system in \cite{Veprauskas17} so that the results therein can be applied.

The main result on systems reduction is developed in Section \ref{sec2}. Two
general discrete-time models of structured metapopulations with two time
scales are presented in Section \ref{sec3}. The first one considers the
dispersal process to be fast and the local demographic one, including
survival, to be slow, whilst in the second one survival is transferred from
the slow to the fast process. These two general models are applied in Section
\ref{sec4} to the particular case of a population structured into three
classes inhabiting in a two-patch environment. With the help of these two
particular models we illustrate how the effect of fast dispersals can make
emerge at the global level asymptotic properties different from those
occurring at the local level. The models are also used to show that the
modelling choice regarding survival can imply drastic changes in behavior. A
discussion and an appendix with the proofs of some results complete the work.

\section{Reduction of slow-fast discrete systems\label{sec2}}

The point of departure is the paper \cite{Sanz08} so we recall the
presentation therein.

Let $N\in\mathbb{Z}_{+}\ $and let $\Omega_{N}\subset\mathbb{R}^{N}$ be a set
with non-empty interior. We start by introducing the \textit{original} or
\textit{complete} model in the following general form:
\begin{equation}
X_{k}(t+1)=H_{k}\left(  X_{k}(t)\right)  \label{modorig}%
\end{equation}
where $k\in\mathbb{Z}_{+}$ and $H_{k}:\Omega_{N}\longrightarrow\Omega
_{N},\ H_{k}\in\mathcal{C}^{1}(\Omega_{N})$. System \eqref{1-sg} is a
particular case of system \eqref{modorig} where $H_{k}=S\circ F^{(k)}$.

In order to carry out the reduction of the model, we assume the following conditions:

\begin{hypothesis}
\label{H1} The following pointwise limit exists in $\Omega_{N}$
\begin{equation}
\lim_{k\rightarrow\infty}H_{k}(X)=:H(X)\in\mathcal{C}^{1}(\Omega_{N}).
\label{pwl}%
\end{equation}

\end{hypothesis}

\begin{hypothesis}
\label{H2} There exist a subset $\Omega_{q}\subset\mathbb{R}^{q}$ with $q<N$
and two maps \newline$G:\Omega_{N}\rightarrow\Omega_{q}$ and $T:\Omega
_{q}\rightarrow\Omega_{N}$ of class $\mathcal{C}^{1}$ on their respective
domains such that
\begin{equation}
\label{decomp}H=T\circ G.
\end{equation}

\end{hypothesis}

The approximate reduction of system (\ref{modorig}) is carried out in two
steps. First, we define the auxiliary system
\begin{equation}
X(t+1)=H(X(t)). \label{modaux}%
\end{equation}
Applying $G$ to both members of the previous expression we have
\[
G(X(t+1))=G\circ H(X(t))=G\circ T\circ G(X(t)),
\]
and so by defining the global variables%
\begin{equation}
Y(t):=G(X(t))\in\mathbb{R}^{q}, \label{gobvar}%
\end{equation}
we obtain the reduced or aggregated system
\begin{equation}
Y(t+1)=\bar{H}(Y(t)), \label{modagreg}%
\end{equation}
where we have introduced the notation $\bar{H}:=G\circ T$.

Note that through this procedure we have constructed an approximation that
allows us to reduce a system with $N$ variables to a new system with $q$
variables. In most practical applications, $q$ is much smaller than $N$.

The work \cite{Sanz08} presents results that allow one to relate the existence
of equilibrium points and periodic orbits for systems (\ref{modorig}) and
(\ref{modagreg}). More specifically, if certain conditions are met (see below)
and the aggregated system has a hyperbolic $p$-periodic point $Y^{\ast}$
($p\in\mathbb{N}$), then for a large enough value of $k$ the original system
has a hyperbolic associated $p$-periodic point $X_{k}^{\ast}$ that can be
approximated in terms of $Y^{\ast}$. Moreover, $X_{k}^{\ast}$ is (locally)
asymptotically stable (resp. unstable) if and only if $Y^{\ast}$ is (locally)
asymptotically stable (resp. unstable) and, in the first case, the basin of
attraction of $X_{k}^{\ast}$ can be approximated in terms of that
corresponding to $Y^{\ast}$.

For the previous results to hold, in addition to Hypotheses \ref{H1} and
\ref{H2} one must include two additional conditions. The first one is:

\begin{hypothesis}
\label{H3} The limit in \eqref{pwl} is uniform on all compact sets of
$\Omega_{N}$.
\end{hypothesis}

The second condition is that $\lim_{k\rightarrow\infty}DH_{k}=DH$ uniformly on
compact sets of $\Omega_{N}$, where $DH$ denotes the differential.

In practice, when applying approximate reduction techniques to population
models even in simple settings, the (uniform) convergence of the differentials
required in the second condition is difficult to check and involves lengthy
reasonings and calculations (see for example the proofs in
\cite{Marva13,Marva09}). In more general situations, proving that the
condition holds is completely unfeasible. This shows the need for results that
allow one to drop this condition and still offer a relationship between the
aggregated and the original model.

Next we present a result of this type where, in the case in which the reduced
system (\ref{modagreg}) has $m$-periodic hyperbolic points, the local dynamics
of the original system (\ref{modorig}) can be characterized in terms of them.
Roughly speaking, and restricting our attention to the case of equilibrium
points, it states that if the aggregated system (\ref{modagreg}) has a
hyperbolic locally asymptotically stable equilibrium point $Y^{\ast}$, then if
we choose any sufficiently small neighbourhood $U$ of the point $X^{\ast
}:=T(Y^{\ast})$ and the value of $k$ is large enough, there is an equilibrium
point $X_{k}^{\ast}$ of system (\ref{modorig}) in $U$ and all trajectories
starting in $U$ do not leave it. Also, if $Y^{\ast}$ is hyperbolic and
unstable for (\ref{modagreg}) then any neighbourhood of $X^{\ast}$ is unstable
\cite{Lasalle76} provided the value of $k$ is large enough.

Let us first recall the following definition:

\begin{definition}
Let $\Omega\subset\mathbb{R}^{N}$ and let $f:\Omega\rightarrow\Omega$. We say
that a compact set $C\subset\Omega$ is a trapping region for system
$X(t+1)=f(X(t))$ whenever $f(C)\subset\ $int$(C)$ (interior of $C$).
\end{definition}

For each $\delta>0$ and $X\in\mathbb{R}^{N}$ let us denote $B(X,\delta
):=\left\{  x\in\mathbb{R}^{N}:\left\Vert x-X\right\Vert <\delta\right\}  $
where $\left\Vert \ast\right\Vert $ denotes the Euclidean norm in
$\mathbb{R}^{N}$ and let $\bar{B}(X,\delta)$ denote the adherence of
$B(X,\delta)$. Now we have:

\begin{theorem}
\label{th1} Let us assume Hypotheses \ref{H1}, \ref{H2} and \ref{H3}. Let
$m\in\mathbb{N\ }$, let $Y^{\ast}\in\mathbb{R}^{q}$ be an hyperbolic
$m$-periodic point of system (\ref{modagreg}) and let $X^{\ast}:=T(Y^{\ast})$.

\begin{enumerate}
\item Assume $Y^{\ast}$ is locally asymptotically stable and let $X_{0}%
\in\Omega_{N}$ be such that $Y_{0}:=G(X_{0})$ satisfies $\lim_{n\rightarrow
\infty}\bar{H}^{mn}(Y_{0})=Y^{\ast}$. Then it follows:

(1.a.) There exists $\delta_{0}>0$ such that for every $\delta>0$ satisfying
$0<\delta<\delta_{0},$ there exists $k_{\delta}\in\mathbb{Z}_{+}$ such that
for all $k\geq k_{\delta}$, $\bar{B}(X^{\ast},\delta)$ is a trapping region
for mapping $H_{k}^{m}$ and, moreover, $H_{k}^{m}$ has at least a fixed point
$X_{k}^{\ast}$ in $B(X^{\ast},\delta)$.

(1.b.) There exists $\delta_{0}>0$ such that, for every $\delta>0$ satisfying
$0<\delta<\delta_{0},$ there exist $n_{\delta},k_{\delta}\in\mathbb{Z}_{+}$
such that $H_{k}^{mn+1}(X_{0})\in B(X^{\ast},\delta)$ for all $k\geq
k_{\delta}$ and $n\geq n_{\delta}$.

\item Assume $Y^{\ast}$ is unstable. Then there exists $\delta_{0}>0$ such
that for every $\delta>0$ satisfying $0<\delta<\delta_{0}$ and any point $X$
such that $\left\Vert X-X^{\ast}\right\Vert =\delta,$ there exists $k_{\delta
}\in\mathbb{Z}_{+}$ such that $H_{k}^{m}(X)\notin\bar{B}(X^{\ast},\delta)$ for
all $k\geq k_{\delta}$. In particular $\bar{B}(X^{\ast},\delta)$ is not a
stable set for $H_{k}^{m}(X)$.
\end{enumerate}
\end{theorem}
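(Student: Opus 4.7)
The first step is to tie the dynamics of $H^m$ near $X^*:=T(Y^*)$ to that of $\bar H^m$ near $Y^*$. Using $H=T\circ G$ and $\bar H=G\circ T$, a direct induction gives $H^n=T\circ\bar H^{n-1}\circ G$ for $n\ge 1$, whence $H^m(X^*)=T(\bar H^m(Y^*))=T(Y^*)=X^*$, so $X^*$ is a fixed point of $H^m$. Differentiating this decomposition along the periodic orbit of $X^*$ and invoking the elementary fact that $AB$ and $BA$ share the same non-zero spectrum, the eigenvalues of $DH^m(X^*)$ coincide with those of $D\bar H^m(Y^*)$ up to extra zero eigenvalues. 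Hyperbolicity, local asymptotic stability and instability of the periodic point therefore transfer unchanged from $Y^*$ for $\bar H^m$ to $X^*$ for $H^m$.

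For part 1(a), the plan is to first build a trapping region for the limit map $H^m$ and then push it onto $H_k^m$ via Hypothesis \ref{H3}. Since $X^*$ is a hyperbolic LAS fixed point of the $\mathcal{C}^1$ map $H^m$, an adapted-norm contraction argument yields $\delta_0>0$ with $H^m(\bar B(X^*,\delta))\subset B(X^*,\delta)$ for every $\delta\in(0,\delta_0)$. I would then upgrade Hypothesis \ref{H3} to the statement $H_k^j\to H^j$ uniformly on compacts for each fixed $j$, by telescoping
\[
H_k^j - H^j = (H_k - H)\circ H_k^{j-1} + H\circ H_k^{j-1} - H\circ H^{j-1}
\]
and using uniform continuity of $H$ on a compact enlargement that contains all sufficiently advanced iterates of $H_k$ started from a fixed compact. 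Because $H^m(\bar B(X^*,\delta))$ sits at positive distance $d$ from $\partial B(X^*,\delta)$, for $k\ge k_\delta$ the estimate $\|H_k^m(X)-H^m(X)\|<d$ on $\bar B(X^*,\delta)$ forces $H_k^m(\bar B(X^*,\delta))\subset B(X^*,\delta)$. Brouwer's fixed-point theorem applied to the continuous self-map $H_k^m$ of the closed ball then provides a fixed point $X_k^*\in B(X^*,\delta)$.

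For part 1(b), the identity $H^{mn+1}(X_0)=T(\bar H^{mn}(Y_0))$ combined with $\bar H^{mn}(Y_0)\to Y^*$ and continuity of $T$ yields $H^{mn+1}(X_0)\to X^*$. I would then pick $n_\delta$ so that $H^{mn_\delta+1}(X_0)\in B(X^*,\delta/2)$, and $k_\delta$ large enough that both the trapping from part 1(a) is in force for $H_k^m$ on $\bar B(X^*,\delta)$ and $\|H_k^{mn_\delta+1}(X_0)-H^{mn_\delta+1}(X_0)\|<\delta/2$; positive invariance then keeps $H_k^{mn+1}(X_0)\in B(X^*,\delta)$ for all $n\ge n_\delta$. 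For part 2, the spectral transfer places an eigenvalue of modulus strictly greater than $1$ in $DH^m(X^*)$, so for $\delta$ small one can select $X$ on the sphere $\|X-X^*\|=\delta$ along the corresponding unstable direction so that the $\mathcal{C}^1$ Taylor expansion of $H^m$ gives $\|H^m(X)-X^*\|>\delta$. Uniform compact convergence of $H_k^m$ to $H^m$ then ensures $H_k^m(X)\notin\bar B(X^*,\delta)$ for $k\ge k_\delta$, from which the instability of $\bar B(X^*,\delta)$ in the sense of LaSalle follows immediately.

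The main obstacle is the promotion of Hypothesis \ref{H3} from uniform convergence of $H_k$ itself to uniform compact convergence of all iterates $H_k^j$: this is precisely where \cite{Sanz08} used the convergence of the differentials $DH_k$ that the present theorem aims to dispense with. The delicate point inside the induction is showing that the iterates $H_k^{j-1}$ started on a fixed compact remain, uniformly in $k\ge k_0$, inside a common larger compact set, so that uniform continuity of $H$ and the uniform convergence $H_k\to H$ can be applied on a single domain. Once this substitute for the missing differential hypothesis is in hand, the construction of Euclidean trapping balls around the hyperbolic sink, the Brouwer argument, and the escape argument for part 2 are all routine.
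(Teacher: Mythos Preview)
Your proposal is correct and follows essentially the same route as the paper. In particular, the ``main obstacle'' you isolate---upgrading Hypothesis~\ref{H3} from $H_k\to H$ to $H_k^j\to H^j$ uniformly on compacts via the telescoping identity and a common compact enlargement---is precisely the content of the paper's Lemma~\ref{lema1}; and your treatment of parts 1(a), 1(b) and 2 (adapted-norm contraction plus Brouwer, the identity $H^{mn+1}(X_0)=T(\bar H^{mn}(Y_0))$ with a $\delta/2$ split, and the unstable-eigendirection Taylor estimate) matches the paper's proof line by line.
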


\begin{proof}
In \cite{Sanz08} it is proved, using only Hypotheses \ref{H1} and \ref{H2},
that if $Y^{\ast}$ is a $m$-periodic point of system (\ref{modagreg}) then
$X^{\ast}:=T(Y^{\ast})$ is a $m$-periodic point of system (\ref{modaux}) and,
moreover, $\rho(D\bar{H}^{m}(Y^{\ast}))=\rho(DH^{m}(X^{\ast})),$ where
$\rho(A)$ denotes the spectral radius of matrix $A$, so that if $Y^{\ast}$ is
hyperbolic asymptotically stable (resp. hyperbolic unstable) for
(\ref{modagreg}) then $X^{\ast}$ is hyperbolic a.s. (resp. hyperbolic u.) for
(\ref{modaux}).

\noindent\textbf{1.a.} Let us assume that $Y^{\ast}$ is a hyperbolic a.s.
$m$-periodic point for system (\ref{modagreg}) and therefore so is $X^{\ast}$
for system (\ref{modaux}). Let $0<\gamma<1$ be such that $\rho(DH^{m}(X^{\ast
}))<\gamma$. Then, it is well known that there exist $\delta_{0}>0$ and a
consistent matrix norm $\Vert\cdot\Vert$ in $\mathbb{R}^{N\times N}$ such that
$\left\Vert DH^{m}(X)\right\Vert \leq\gamma$ for every $X\in\bar{B}(X^{\ast
},\delta_{0})$. Let $\delta$ be such that $0<\delta<\delta_{0}$. Then, if
$X\in\bar{B}(X^{\ast},\delta)$
\[
\Vert H^{m}(X)-X^{\ast}\Vert=\Vert H^{m}(X)-H^{m}(X^{\ast})\Vert\leq
\gamma\Vert X-X^{\ast}\Vert\leq\gamma\delta.
\]
From the uniform convergence of $H_{k}$ to $H$ on compact sets it follows (see
Lemma \ref{lema1} in the Appendix) that there exists $k_{\delta}\in
\mathbb{Z}_{+}$ such that for all $k\geq k_{\delta}$,
\[
\sup_{X\in\bar{B}(X^{\ast},\delta)}\left\Vert H_{k}^{m}(X)-H^{m}(X)\right\Vert
<\delta(1-\gamma).
\]
Then for every $X\in\bar{B}(X^{\ast},\delta)$ and every $k\geq k_{\delta} $
\[%
\begin{array}
[c]{l}%
\left\Vert H_{k}^{m}(X)-X^{\ast}\right\Vert \leq\left\Vert H_{k}^{m}%
(X)-H^{m}(X)\right\Vert +\left\Vert H^{m}(X)-X^{\ast}\right\Vert \leq\\
\rule{4ex}{0ex}\leq\sup_{X\in\bar{B}(X^{\ast},\delta)}\left\Vert H_{k}%
^{m}(X)-H^{m}(X)\right\Vert +\left\Vert H^{m}(X)-X^{\ast}\right\Vert
<\delta(1-\gamma)+\gamma\delta=\delta,
\end{array}
\]
so that $\bar{B}(X^{\ast},\delta)$ is a trapping region for mapping $H_{k}%
^{m}$ as we wanted to prove.

Moreover, since for $k\geq k_{\delta}$ the set $\bar{B}(X^{\ast},\delta)$ is
convex, compact and positively invariant for $H_{k}^{m},$ then Brower's fixed
point theorem \cite{border1989fixed} assures that there exists at least a
fixed point $X_{k}^{\ast}$ for $H_{k}^{m}$ in $\bar{B}(X^{\ast},\delta)$.

\vspace{3ex}

\noindent\textbf{1.b.} Let $\delta_{0}$ be like in (1.a) and let $\delta$ be
such that $0<\delta<\delta_{0}$. Now using (1.a) we know that there exists
$k^{\prime}_{\delta}\in\mathbb{Z}_{+}$ such that for all $k\geq k^{\prime
}_{\delta}$,
\begin{equation}
\label{211}H_{k}^{m}(\bar{B}(X^{\ast},\delta))\subset B(X^{\ast},\delta).
\end{equation}

Let $X_{0}\in\Omega_{N}$ be such that $Y_{0}:=G(X_{0})$ satisfies
$\lim_{n\rightarrow\infty}\bar{H}^{mn}(Y_{0})=Y^{\ast}$. Thus, the continuity
of $T$ implies $\lim_{n\rightarrow\infty}T(\bar{H}^{mn}(Y_{0}))=T(Y^{\ast})$.
Using that $T\circ\bar{H}^{n-1}\circ G=H^{n}$ (Proposition 3.3. in
\cite{Sanz08}) and the fact that $G(X_{0})=Y_{0}$ it follows
\[
\lim_{n\rightarrow\infty}H^{mn+1}(X_{0})=X^{\ast}.
\]
Then, there exists $n_{\delta}\in\mathbb{Z}_{+}$ such that for all $n\geq
n_{\delta}$,
\begin{equation}
\left\Vert H^{mn+1}(X_{0})-X^{\ast}\right\Vert <\delta/2. \label{213}%
\end{equation}

The uniform convergence of $H_{k}$ to $H$ on compact sets of $\Omega_{N}$
implies, using Lemma \ref{lema1}, that $\lim_{k\rightarrow\infty}%
H_{k}^{mn_{\delta}+1}(X_{0})=H^{mn_{\delta}+1}(X_{0})$. Therefore, there
exists $k^{\prime\prime}_{\delta}\in\mathbb{Z}_{+}$ such that for all $k\geq
k^{\prime\prime}_{\delta}$
\begin{equation}
\label{214}\left\Vert H_{k}^{mn_{\delta}+1}(X_{0})-H^{mn_{\delta}+1}%
(X_{0})\right\Vert <\delta/2
\end{equation}

Using now (\ref{213}) and (\ref{214}) we have that for all $k\geq
k^{\prime\prime}_{\delta}$
\begin{align*}
\left\Vert H_{k}^{mn_{\delta}+1}(X_{0})-X^{\ast}\right\Vert  &  \leq\left\Vert
H_{k}^{mn_{\delta}+1}(X_{0})-H^{mn_{\delta}+1}(X_{0})\right\Vert +\\
&  \rule{2ex}{0ex}+\left\Vert H_{k}^{mn_{\delta}+1}(X_{0})-H^{mn_{\delta}%
+1}(X_{0})\right\Vert <\delta/2+\delta/2 =\delta,
\end{align*}
that is, $H_{k}^{mn_{\delta}+1}(X_{0})\in B(X^{\ast},\delta)$. Finally, taking
$k_{\delta}=\max(k^{\prime}_{\delta},k^{\prime\prime}_{\delta})$ and using
(\ref{211}) we have that for all $k\geq k_{\delta}$, and all $n\geq n_{\delta
}$, $H_{k}^{mn+1}(X_{0})\in B(X^{\ast},\delta)$ and so the result is proved.

\vspace{3ex}

\noindent\textbf{2. }Let $\left\Vert \ast\right\Vert $ be any norm in
$\mathbb{R}^{N}$. Let $\lambda$ be an eigenvalue $\lambda$ of $DH^{m}(X^{\ast
})$ such that $\left\vert \lambda\right\vert >1$ and let $u$ be an unit vector
belonging to the corresponding eigenspace. Now let $\mu>0$ be such that
$\gamma:=\left\vert \lambda\right\vert -\mu$ is larger than 1.

We know that
\[
H^{m}\left(  X\right)  =X^{\ast}+DH^{m}(X^{\ast})\left(  X-X^{\ast}\right)
+\left\Vert X-X^{\ast}\right\Vert G(X),
\]
where $G$ is continuous in a neighbourhood of $X^{\ast}$, so that there exists
$\delta_{0}>0$ such that for all $X$ satisfying $\left\Vert X-X^{\ast
}\right\Vert <\delta_{0}$ we have $\left\Vert G(X)\right\Vert <\mu$.

Now let $\delta$ be such that $0<\delta<\delta_{0}$ and let $X:=X^{\ast
}+\delta u\in\bar{B}(X^{\ast},\delta)$. Then
\begin{align*}
\rule{-5ex}{0ex}\left\Vert H^{m}(X)-X^{\ast}\right\Vert  &  =\left\Vert
H^{m}(X^{\ast}+\delta u)-X^{\ast}\right\Vert =\left\Vert \delta DH^{m}%
(X^{\ast})u+\left\Vert \delta u\right\Vert G(X^{\ast}+\delta u)\right\Vert
\geq\\
&  \geq\delta\left\Vert DH^{m}(X^{\ast})u\right\Vert -\left\Vert \delta
u\right\Vert \left\Vert G(X^{\ast}+\delta u)\right\Vert =\\
&  =\delta\left\vert \lambda\right\vert \left\Vert u\right\Vert -\delta
\left\Vert G(X^{\ast}+\delta u)\right\Vert >\delta\left\vert \lambda
\right\vert -\delta\mu=\gamma\delta.
\end{align*}
From the uniform convergence of $H_{k}^{m}$ to $H^{m}$ on compact sets (Lemma
\ref{lema1} in Appendix) it follows that there exists $k_{\delta}\in
\mathbb{Z}_{+}$ such that for all $k\geq k_{\delta}$,%
\[
\sup_{X\in\bar{B}(X^{\ast},\delta)}\left\Vert H_{k}^{m}(X)-H^{m}(X)\right\Vert
<\delta(\gamma-1).
\]
Then if $X$ is such that $\left\Vert X-X^{\ast}\right\Vert =\delta$ we have
that for all $k\geq k_{\delta}$
\[%
\begin{array}
[c]{l}%
\left\Vert H_{k}^{m}(X)-X^{\ast}\right\Vert \geq\left\Vert H^{m}(X)-X^{\ast
}\right\Vert -\left\Vert H_{k}^{m}(X)-H^{m}(X)\right\Vert \geq\\
\rule{4ex}{0ex}\geq\left\Vert H^{m}(X)-X^{\ast}\right\Vert -\sup_{X\in\bar
{B}(X^{\ast},\delta)}\left\Vert H_{k}^{m}(X)-H^{m}(X)\right\Vert >\gamma
\delta-\delta(\gamma-1)=\delta,
\end{array}
\]
as we wanted to prove.
\end{proof}

\begin{remark}
For fixed $k\geq k_{\delta}$ it is not possible to claim that $X_{k}^{\ast}$
is attracting or stable. However, Theorem \ref{th1} provides information about
the original system which is, for all practical purposes, as valuable as that
provided by the results of \cite{Sanz08}, and has the advantage of dropping
the above mentioned stringent condition on the convergence of the differentials.
\end{remark}

\section{Two time scales structured metapopulations models: re-scaling
survival to the fast time unit\label{sec3}}

In this section we present two density dependent discrete population models
whose dynamics is driven by two processes, slow and fast. The population is
considered structured into $q$ stages and inhabiting an environment divided
into $r$ patches. In the first general model that we propose, the fast process
includes the movements of the individuals between patches and the slow process
consists in all demographic issues. After this, we propose a new model where
we carry out the re-scaling of the death process to the fast time unit.

We consider that dispersal between patches is fast with respect to demography
and we denote by $k$ the ratio between the characteristic time scales of both processes.

The state of the population at time $t$ is represented by vector
\[
X(t):=(X_{1}(t),\ldots,X_{q}(t))^{\mathsf{T}}\in\mathbb{R}_{+}^{qr},
\]
where $X_{i}(t):=(x_{i}^{1}(t),\ldots,x_{i}^{r}(t))\in\mathbb{R}_{+}^{r}$ and
$x_{i}^{\alpha}(t)$ denotes the population density in stage $i$ and patch
$\alpha$ at time $t$.

For each stage $i$, we represent dispersal by a projection matrix which is a
primitive probability matrix $M_{i}(Y)\in\mathbb{R}_{+}^{r\times r}$ possibly
depending on the total number of individuals in each stage. i.e., on vector
\[
Y:=\left(  y_{1},...,y_{q}\right)  ^{\mathsf{T}},
\]
where $y_{i}:=\sum_{\alpha=1}^{r}x_{i}^{\alpha}=\mathbf{1}X_{i}$ is the total
population in stage $i$ and we are denoting $\mathbf{1}:=(1,\overset{(r)}%
{...},1)$. Vector $Y$ will play the role of global variables \eqref{gobvar} in
the reduced system. If we denote $\mathbf{U}:=\text{diag}\left(
\mathbf{1},...,\mathbf{1}\right)  \in\mathbb{R}_{+}^{q\times qr}$ we can
express $Y$ in terms of $X$ in the following way:
\[
G(X):=Y=\mathbf{U}X.
\]
Finally, we define $\mathbf{M}(Y):=\text{diag}\left(  M_{1}(Y),\ldots
,M_{q}(Y)\right)  $ and thus the map $F$ defining the fast process, i.e.,
dispersal, is
\[
F(X):=\mathbf{M}(Y)X=\mathbf{M}(\mathbf{U}X)X.
\]
The fact that vector $\mathbf{1}$ is a left eigenvector of matrices $M_{i}$
associated to eigenvalue 1 implies that $\mathbf{U}\mathbf{M}(\mathbf{U}%
X)=\mathbf{U}$, so $\mathbf{U}\mathbf{M}(\mathbf{U}X)X=\mathbf{U}X$ and
\[
F^{(k)}(X)=\mathbf{M}(\mathbf{U}X)^{k}X,
\]
that is, we can express the $k$-th iterate of map $F$ in terms of the power
$k$ of matrix $\mathbf{M}(\mathbf{U}X)$.

In the sequel we assume that all the vital rates, and therefore the resulting
vital rate matrices, are $C^{1}$ functions of their corresponding variables.

To represent the slow process, associated to demography, we define a
nonnegative projection matrix that can depend on the state variables
\[
\mathbf{D}(X)=\left[  D_{ij}(X)\right]  _{1\leq i,j\leq q}\in\mathbb{R}%
^{qr\times qr},
\]
and is divided into blocks $D_{ij}(X)=\text{diag}\left(  d_{ij}^{\alpha
}(X)\right)  \in\mathbb{R}^{r\times r}$ where $d_{ij}^{\alpha}(X)$ represents
the rate of individual transition from stage $j$ to stage $i$ in patch
$\alpha$ during a slow time interval when the state of the population is
represented by vector $X$. Thus, the map defining the slow process is
\[
S(X):=\mathbf{D}(X)X.
\]

With the maps $F$ and $S$ we propose a first two time scales model in the form
of system \eqref{1-sg}:
\begin{equation}
X(t+1)=S\left(  F^{(k)}\left(  X(t)\right)  \right)  =\mathbf{D}\left(
\mathbf{M}(\mathbf{U}X(t))^{k}X(t)\right)  \mathbf{M}(\mathbf{U}X(t))^{k}X(t).
\label{mod1}%
\end{equation}

In this model individuals first perform a series of $k$ dispersal events
followed by a demographic update that occurs in the arrival patch. This
assumption is realistic concerning reproduction specially in the case of
populations with separated generations. However, the situation might be
different for a process like mortality, for individuals may die at any time
during the dispersal process. Now we show how to take this into account by
proposing a new model, based on the previous one, in which mortality acts at
the fast time scale, i.e., we re-scale survival to the fast time scale.

Let $s_{i}^{\alpha}>0$ be the stage and patch specific survival rate for stage
i and patch $\alpha$, that is, the fraction of individuals in stage $i$
($i=1,...,q$) alive at time $t$ that survive to time $t+1$ in patch $\alpha$
($\alpha=1,...,r$). We assume that $s_{i}^{\alpha}$ possibly depends on $Y$,
i.e, $s_{i}^{\alpha}=s_{i}^{\alpha}(Y)$.

We first factor out every coefficient $d_{ij}^{\alpha}(X)$ in matrix
$\mathbf{D}(X)$ in order to make the survival rate appear explicitly, i.e., we
define
\[
\tilde{d}_{ij}^{\alpha}(X):=d_{ij}^{\alpha}(X)/\,s_{j}^{\alpha}(Y).
\]
This factorization can be extended to matrix $\mathbf{D}(X)$. Indeed, we
define
\[
\tilde{D}_{ij}(X)=\text{diag}\left(  \tilde{d}_{ij}^{\alpha}(X)\right)
_{\alpha=1,\ldots,r},\quad\tilde{\mathbf{D}}(X)=\left[  \tilde{D}%
_{ij}(X)\right]  _{1\leq i,j\leq q},
\]%
\[
S_{j}(Y)=\text{diag}\left(  s_{j}^{\alpha}(Y)\right)  _{\alpha=1,\ldots
,r}\text{ and }\mathbf{S}(Y)=\text{diag}\left(  {S}_{j}(Y)\right)
_{j=1,\ldots,q},
\]
so that $D_{ij}(X)=\tilde{D}_{ij}(X)\,S_{j}(Y)$ and
\[
\mathbf{D}(X)=\tilde{\mathbf{D}}(X)\,\mathbf{S}(Y).
\]

We can now write $\mathbf{S}(Y)=\mathbf{S}_{k}(Y)^{k}$, i.e., $\mathbf{S}(Y)$
is the $k$-th power of the matrix $\mathbf{S}_{k}(Y)$ given by
\[
\mathbf{S}_{k}(Y)=\text{diag}\left(  {S}_{j,k}(Y)\right)  _{j=1,\ldots
,q}\text{ and }S_{j,k}(Y)=\text{diag}\left(  s_{j}^{\alpha}(Y)^{1/k}\right)
_{\alpha=1,\ldots,r}.
\]
We consider that matrix $\mathbf{S}_{k}(Y)$ describes the effect of survival
at the fast time scale, and based on it we propose a second two time scales
model with re-scaled survival:
\begin{equation}
X(t+1)=\tilde{\mathbf{D}}\left(  \left(  \mathbf{S}_{k}(\mathbf{U}%
X(t))\,\mathbf{M}(\mathbf{U}X(t))\rule{0ex}{2ex}\right)  ^{k}X(t)\right)
\left(  \mathbf{S}_{k}(\mathbf{U}X(t))\,\mathbf{M}(\mathbf{U}%
X(t))\rule{0ex}{2ex}\right)  ^{k}X(t). \label{mod2}%
\end{equation}

In this model individuals first perform a series of $k$ dispersal events in
which mortality is taken into account in each of them through corresponding
survival rate. The rest of the demographic process follows at the slow time scale.

\subsection{Reduction of models \eqref{mod1} and \eqref{mod2}}

\label{sec31}

To reduce \eqref{mod1} we show how the corresponding sequence of maps
\[
H_{k}(X)=\mathbf{D}\left(  \mathbf{M}(\mathbf{U}X)^{k}X\right)  \mathbf{M}%
(\mathbf{U}X)^{k}X
\]
satisfies Hypotheses \ref{H1}, \ref{H2}, and \ref{H3}.

The key point is to find the limit of the powers of matrix $\mathbf{M}%
(\mathbf{U}X)$. We use the fact that the $M_{i}(\mathbf{U}X)$ are primitive
probability matrices. Therefore their dominant eigenvalue is 1, vector
$\mathbf{1}$ is an associated left eigenvector and there exists, for each
$i=1,\ldots,q$ and $\mathbf{U}X\in\mathbb{R}_{+}^{q}$, a unique column
positive right eigenvector $V_{i}(\mathbf{U}X)\in\mathbb{R}^{r}$ such that
$\mathbf{1}V_{i}(\mathbf{U}X)=1$. The Perron-Frobenius theorem yields that
\[
\lim_{k\rightarrow\infty}M_{i}(\mathbf{U}X)^{k}=V_{i}(\mathbf{U}%
X))\mathbf{1}.
\]
Calling $\mathbf{V}(\mathbf{U}X):=\text{diag}\left(  V_{i}(\mathbf{U}%
X))\right)  _{i=1,\ldots,q}\in\mathbb{R}_{+}^{qr\times q}$ we have that
\[
\lim_{k\rightarrow\infty}\mathbf{M}(\mathbf{U}X)^{k}=\mathbf{V}(\mathbf{U}%
X)\,\mathbf{U}%
\]
and thus
\[
\lim_{k\rightarrow\infty}H_{k}(X)=\mathbf{D}\left(  \mathbf{V}(\mathbf{U}%
X)\,\mathbf{U}X\right)  \mathbf{V}(\mathbf{U}X)\,\mathbf{U}X,
\]
and so the limit in Hypothesis \ref{H1} exists. The decomposition of this
limit required in Hypothesis \ref{H2} is obtained by defining $G(X)=\mathbf{U}%
X$ and $T(Y)=\mathbf{D}\left(  \mathbf{V}(Y)Y\right)  \mathbf{V}(Y)Y$. The
rest of the technical details involved in proving that Hypotheses \ref{H1} and
\ref{H3} hold can be found in \cite{Marva09}.

The reduced system associated to \eqref{mod1} is
\begin{equation}
\label{mod1a}Y(t+1)=\bar{H}(Y(t))=G(T(Y(t)))=\mathbf{U}\,\mathbf{D}\left(
\mathbf{V}(Y(t))Y(t)\right)  \mathbf{V}(Y(t))Y(t).
\end{equation}
Theorem \ref{th1} can be applied to systems \eqref{mod1} and \eqref{mod1a} so
that we can obtain information on the asymptotic behaviour of solutions to
system \eqref{mod1} by performing the analysis in the reduced system \eqref{mod1a}.

We now proceed to the reduction of system \eqref{mod2}, for which we have
\[
H_{k}(X)=\tilde{\mathbf{D}}\left(  \left(  \mathbf{S}_{k}(\mathbf{U}%
X)\,\mathbf{M}(\mathbf{U}X)\rule{0ex}{2ex}\right)  ^{k}X\right)  \left(
\mathbf{S}_{k}(\mathbf{U}X)\,\mathbf{M}(\mathbf{U}X)\rule{0ex}{2ex}\right)
^{k}X.
\]
In this occasion we have to calculate the limit of the $k$-th power of matrix
$\mathbf{S}_{k}(\mathbf{U}X)\,\mathbf{M}(\mathbf{U}X)$.

A straightforward application of Proposition \ref{prop:conv} in the Appendix
to each of the diagonal blocks of matrix $\mathbf{S}_{k}(\mathbf{U}%
X)\,\mathbf{M}(\mathbf{U}X)$ yields the result. For every $i=1,\ldots,q$ let
$Z_{i}(\mathbf{U}X)=\left(  \log(s_{i}^{1}(\mathbf{U}X)),\ldots,\log(s_{i}%
^{r}(\mathbf{U}X))\right)  \in\mathbb{R}^{r}$ be a row vector and define the
scalar $\gamma_{i}(\mathbf{U}X):=\exp\left(  Z_{i}(\mathbf{U}X)\,V_{i}%
(\mathbf{U}X)\right)  $. Denoting
\[
\tilde{\mathbf{V}}:=\text{diag}\left(  \gamma_{i}(\mathbf{U}X)V_{i}%
(\mathbf{U}X)\right)  _{i=1,\ldots,q}\in\mathbb{R}_{+}^{q\times qr},
\]
we obtain
\[
\lim_{k\rightarrow\infty}\left(  \mathbf{S}_{k}(\mathbf{U}X)\,\mathbf{M}%
(\mathbf{U}X)\right)  ^{k}=\tilde{\mathbf{V}}(\mathbf{U}X)\,\mathbf{U}.
\]
Therefore,
\[
\lim_{k\rightarrow\infty}H_{k}(X)=\tilde{\mathbf{D}}\left(  \tilde{\mathbf{V}%
}(\mathbf{U}X)\,\mathbf{U}X\right)  \tilde{\mathbf{V}}(\mathbf{U}%
X)\,\mathbf{U}X.
\]
Defining $G(X)=\mathbf{U}X$ and $\tilde{T}(Y)=\tilde{\mathbf{D}}\left(
\tilde{\mathbf{V}}(Y)Y\right)  \tilde{\mathbf{V}}(Y)Y,$ Hypotheses \ref{H1},
\ref{H2} and \ref{H3} are met and the reduced system associated to
\eqref{mod2} is
\begin{equation}
Y(t+1)=\tilde{H}(Y(t))=G(\tilde{T}(Y))=\mathbf{U}\,\tilde{\mathbf{D}}\left(
\tilde{\mathbf{V}}(Y(t))Y(t)\right)  \tilde{\mathbf{V}}(Y(t))Y(t).
\label{mod2a}%
\end{equation}

\section{Models \eqref{mod1} and \eqref{mod2} in a three-stage, two-patch
case}

\label{sec4}

We propose the model of a population inhabiting a two-patch environment.
Following the model presented in \cite{Veprauskas17}, locally the population
is considered structured into three classes corresponding to juveniles and two
adult stages. Adults that reproduce at the end of an interval of time do not
reproduce at the end of the following one. A fraction of those not having
reproduced do at the end of the following interval. Thus, adults are
classified into those reproducing at the end of the time interval, active
adults, and those adults who do not, inactive adults. The aim of this
consideration is analyzing the reproductive synchrony of adults.

Let $x_{1}^{\alpha}$, $x_{2}^{\alpha}$ and $x_{3}^{\alpha}$ be the number of
juveniles, active adults and inactive adults, respectively, in patch
$\alpha=1,2$. The demographic vector is then
\[
X=\left(  x_{1}^{1},x_{1}^{2},x_{2}^{1},x_{2}^{2},x_{3}^{1},x_{3}^{2}\right)
^{\mathsf{T}}\in\mathbb{R}_{+}^{6}.
\]
The unit of time of the discrete model that we are proposing coincides with
the juvenile maturation period. For $i=1,2,3$ and $\alpha=1,2$, we denote
$s_{i}^{\alpha}\in(0,1)$ the corresponding survival probabilities, $f^{\alpha
}>0$ the adult per capita fecundity rates and $g^{\alpha}\in[0,1]$ the
fractions of reproductively inactive adults becoming active at the next time
unit. Figure \ref{figgraph} shows a transition graph for the population.

\setlength{\unitlength}{2ex}
\begin{figure}[ht]
\begin{center}
\noindent
\begin{picture}(36,20)

\put(19,15){\oval(33.5,8)}
\put(19,5){\oval(33.5,8)}

\put(7,14){\oval(6,4)}
\put(17,14){\oval(6,4)}
\put(27,14){\oval(6,4)}
\put(7,6){\oval(6,4)}
\put(17,6){\oval(6,4)}
\put(27,6){\oval(6,4)}

\put(18,17){\makebox(0,0)[b]{\footnotesize{\textbf{PATCH 1}}}}
\put(18,2.5){\makebox(0,0)[b]{\footnotesize{\textbf{PATCH 2}}}}

\put(6.5,8.2){\vector(0,1){3.6}}
\put(7.5,11.8){\vector(0,-1){3.6}}
\put(16.5,8.2){\vector(0,1){3.6}}
\put(17.5,11.8){\vector(0,-1){3.6}}
\put(26.5,8.2){\vector(0,1){3.6}}
\put(27.5,11.8){\vector(0,-1){3.6}}

\put(10.2,14.5){\vector(1,0){3.6}}
\put(13.8,13.5){\vector(-1,0){3.6}}
\put(20.2,14.5){\vector(1,0){3.6}}
\put(23.8,13.5){\vector(-1,0){3.6}}
\put(10.2,6.5){\vector(1,0){3.6}}
\put(13.8,5.5){\vector(-1,0){3.6}}
\put(20.2,6.5){\vector(1,0){3.6}}
\put(23.8,5.5){\vector(-1,0){3.6}}

\put(29.7,14){\oval(3,3)[tr]}
\put(29.7,14){\oval(3,3)[br]}
\put(29.7,12.5){\vector(-1,0){0.1}}
\put(29.7,6){\oval(3,3)[tr]}
\put(29.7,6){\oval(3,3)[br]}
\put(29.7,7.5){\vector(-1,0){0.1}}

\put(7,14.7){\makebox(0,0){\scriptsize{\textbf{Juveniles}}}}
\put(7,13.1){\makebox(0,0){$x_1^1$}}
\put(17,15.3){\makebox(0,0){\scriptsize{\textbf{Active}}}}
\put(17,14.5){\makebox(0,0){\scriptsize{\textbf{adults}}}}
\put(17,13.1){\makebox(0,0){$x_2^1$}}
\put(27,15.3){\makebox(0,0){\scriptsize{\textbf{Inactive}}}}
\put(27,14.5){\makebox(0,0){\scriptsize{\textbf{adults}}}}
\put(27,13.1){\makebox(0,0){$x_3^1$}}
\put(7,6.7){\makebox(0,0){\scriptsize{\textbf{Juveniles}}}}
\put(7,5.1){\makebox(0,0){$x_1^2$}}
\put(17,7.3){\makebox(0,0){\scriptsize{\textbf{Active}}}}
\put(17,6.5){\makebox(0,0){\scriptsize{\textbf{adults}}}}
\put(17,5.1){\makebox(0,0){$x_2^2$}}
\put(27,7.3){\makebox(0,0){\scriptsize{\textbf{Inactive}}}}
\put(27,6.5){\makebox(0,0){\scriptsize{\textbf{adults}}}}
\put(27,5.1){\makebox(0,0){$x_3^2$}}

\put(12,15.3){\makebox(0,0){$s_1^1$}}
\put(12,12.6){\makebox(0,0){$f^1 s_2^1$}}
\put(22,15.3){\makebox(0,0){$s_2^1$}}
\put(22,12.6){\makebox(0,0){$g^1 s_3^1$}}
\put(33.3,14){\makebox(0,0){$1-g^1 s_3^1$}}
\put(12,7.3){\makebox(0,0){$s_1^2$}}
\put(12,4.6){\makebox(0,0){$f^2 s_2^2$}}
\put(22,7.3){\makebox(0,0){$s_2^2$}}
\put(22,4.6){\makebox(0,0){$g^2 s_3^2$}}
\put(33.3,6){\makebox(0,0){$1-g^2 s_3^2$}}
\end{picture}
\caption{Transition graph of a population structured in three classes and two patches.}
\label{figgraph}
\end{center}
\end{figure}
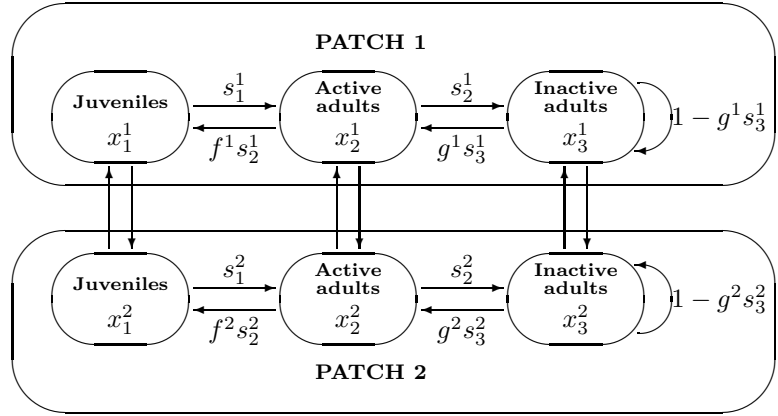

The projection matrix, $\mathbf{D}$, associated to demography is
\[
\mathbf{D}=\left(
\begin{array}
[c]{cccccc}%
0 & 0 & f^{1}s_{2}^{1} & 0 & 0 & 0\\
0 & 0 & 0 & f^{2}s_{2}^{2} & 0 & 0\\
s_{1}^{1} & 0 & 0 & 0 & g^{1}s_{3}^{1} & 0\\
0 & s_{1}^{2} & 0 & 0 & 0 & g^{2}s_{3}^{2}\\
0 & 0 & s_{2}^{1} & 0 & (1-g^{1})s_{3}^{1} & 0\\
0 & 0 & 0 & s_{2}^{2} & 0 & (1-g^{2})s_{3}^{2}%
\end{array}
\right)
\]
that can also be written as
\[
\mathbf{D}=\tilde{\mathbf{D}}\mathbf{S}=\left(
\begin{array}
[c]{cccccc}%
0 & 0 & f^{1} & 0 & 0 & 0\\
0 & 0 & 0 & f^{2} & 0 & 0\\
1 & 0 & 0 & 0 & g^{1} & 0\\
0 & 1 & 0 & 0 & 0 & g^{2}\\
0 & 0 & 1 & 0 & 1-g^{1} & 0\\
0 & 0 & 0 & 1 & 0 & 1-g^{2}%
\end{array}
\right)  \left(
\begin{array}
[c]{cccccc}%
s_{1}^{1} & 0 & 0 & 0 & 0 & 0\\
0 & s_{1}^{2} & 0 & 0 & 0 & 0\\
0 & 0 & s_{2}^{1} & 0 & 0 & 0\\
0 & 0 & 0 & s_{2}^{2} & 0 & 0\\
0 & 0 & 0 & 0 & s_{3}^{1} & 0\\
0 & 0 & 0 & 0 & 0 & s_{3}^{2}%
\end{array}
\right)
\]
Migrations are represented by primitive probability matrices $M_{i}%
\in\mathbb{R}_{+}^{2\times2}$ ($i=1,2,3$), that are the diagonal blocks of
\[
\mathbf{M}=\text{diag}\left(  M_{1},M_{2},M_{3}\right)  .
\]
According to the framework of models \eqref{mod1} and \eqref{mod2},
fecundities and transitions from reproductive inactivity may depend on the
state variables, whereas the survival and dispersal coefficients could
possibly depend on the total number of individuals in each stage
\[
Y:=\left(  y_{1},y_{2},y_{3}\right)  ^{\mathsf{T}},
\]
where $y_{i}=x_{i}^{1}+x_{i}^{2}=\mathbf{1}X_{i}$, with $\mathbf{1}=(1,1)$, is
the total population in stage $i$. We recall that in the present case
$\mathbf{U}=\text{diag}\left(  \mathbf{1},\mathbf{1},\mathbf{1}\right)
\in\mathbb{R}_{+}^{3\times6}$, and we have $Y=\mathbf{U}X$.

The model with survival acting at the slow time scale is
\begin{equation}
X(t+1)=\mathbf{D}\left(  \mathbf{M}(Y(t))^{k}X(t)\right)  \mathbf{M}%
(Y(t))^{k}X(t), \label{mod41}%
\end{equation}
whereas the model with survival acting at the fast time scale is
\begin{equation}
X(t+1)=\tilde{\mathbf{D}}\left(  \left(  \mathbf{S}_{k}(Y(t))\,\mathbf{M}%
(Y(t))\rule{0ex}{2ex}\right)  ^{k}X(t)\right)  \left(  \mathbf{S}%
_{k}(Y(t))\,\mathbf{M}(Y(t))\rule{0ex}{2ex}\right)  ^{k}X(t), \label{mod42}%
\end{equation}
where $\mathbf{S}_{k}=\text{diag}\left(  S_{k1},S_{k2},S_{k3}\right)  $ with
$S_{ki}=\text{diag}\left(  (s_{i}^{1})^{1/k},(s_{i}^{2})^{1/k}\right)  $ for
$i=1,2,3$.

To simplify calculations we first assume that dispersal rates are constant.

For $i=1,2,3$ let $V_{i}=(v_{i}^{1},v_{i}^{2})^{\mathsf{T}}$ be the unique
column positive right eigenvector of matrix $M_{i}$ associated to eigenvalue 1
such that $\mathbf{1}V_{i}=1$. We are using matrix
\[
\mathbf{V}=\text{diag}\left(  V_{1},V_{2},V_{3}\right)  ,
\]
to obtain the reduced systems \eqref{mod1a} and \eqref{mod2a} associated to
systems \eqref{mod1} and \eqref{mod2}.

We also assume, following the example in Section 2 of \cite{Veprauskas17},
that survival rates are constant and fecundities and transitions from
reproductive inactivity are locally dependent on the number of active adults.
Specifically, we set
\[
f^{\alpha}(x_{2}^{\alpha}):=\frac{\phi^{\alpha}}{1+c^{\alpha}x_{2}^{\alpha}%
}\ \text{ and }g^{\alpha}(x_{2}^{\alpha}):=\frac{1}{1+d^{\alpha}x_{2}^{\alpha
}},\ \alpha=1,2,
\]
where $\phi^{\alpha}>0$ are the inherent fertility rates and parameters
$c^{\alpha}$ and $d^{\alpha}$ are positive.

To obtain the reduced system associated to system \eqref{mod1} we apply the
procedure described in Section \ref{sec31}. The result is the next
3-dimensional system:
\begin{equation}
Y(t+1)=\left(
\begin{array}
[c]{ccc}%
0 & \bar{b}\bar{h}_{1}(y_{2}(t)) & 0\\
\bar{s}_{1} & 0 & \bar{s}_{3}\bar{h}_{2}(y_{2}(t))\\
0 & \bar{s}_{2} & \bar{s}_{3}\left(  1-\bar{h}_{2}(y_{2}(t))\right)
\end{array}
\right)  Y(t)=\bar{H}(Y(t))Y(t), \label{mod41a}%
\end{equation}
where
\[
\bar{s}_{i}:=s_{i}^{1}v_{i}^{1}+s_{i}^{2}v_{i}^{2}\ (i=1,2,3)\text{ , }\bar
{b}:=\phi^{1}s_{2}^{1}v_{2}^{1}+\phi^{2}s_{2}^{2}v_{2}^{2}\ ,
\]%
\[
\bar{h}_{1}(y_{2}):=\frac{\phi^{1}s_{2}^{1}v_{2}^{1}/\bar{b}}{1+c^{1}v_{2}%
^{1}y_{2}}+\frac{\phi^{2}s_{2}^{2}v_{2}^{2}/\bar{b}}{1+c^{2}v_{2}^{2}y_{2}%
}\ \text{ and }\ \bar{h}_{2}(y_{2}):=\frac{s_{3}^{1}v_{3}^{1}/\bar{s}_{3}%
}{1+d^{1}v_{2}^{1}y_{2}}+\frac{s_{3}^{2}v_{3}^{2}/\bar{s}_{3}}{1+d^{2}%
v_{2}^{2}y_{2}}.
\]

Associated to system \eqref{mod2} we obtain the following reduced system:
\begin{equation}
Y(t+1)=\left(
\begin{array}
[c]{ccc}%
0 & \tilde{b}\tilde{h}_{1}(y_{2}(t)) & 0\\
\tilde{s}_{1} & 0 & \tilde{s}_{3}\tilde{h}_{2}(y_{2}(t))\\
0 & \tilde{s}_{2} & \tilde{s}_{3}\left(  1-\tilde{h}_{2}(y_{2}(t))\right)
\end{array}
\right)  Y(t)=\tilde{H}(Y(t))Y(t), \label{mod42a}%
\end{equation}
where
\[
\tilde{s}_{i}:=\left(  s_{i}^{1}\right)  ^{v_{i}^{1}}\left(  s_{i}^{2}\right)
^{v_{i}^{2}}\ (i=1,2,3)\text{ , }\tilde{b}:=\tilde{s}_{2}\left(  \phi^{1}%
v_{2}^{1}+\phi^{2}v_{2}^{2}\right)  \ ,
\]%
\[
\tilde{h}_{1}(y_{2}):=\frac{\phi^{1}\tilde{s}_{2}v_{2}^{1}/\tilde{b}}%
{1+c^{1}\tilde{s}_{2}v_{2}^{1}y_{2}}+\frac{\phi^{2}\tilde{s}_{2}v_{2}%
^{2}/\tilde{b}}{1+c^{2}\tilde{s}_{2}v_{2}^{2}y_{2}}\ \text{ and }\ \tilde
{h}_{2}(y_{2}):=\frac{v_{3}^{1}}{1+d^{1}\tilde{s}_{2}v_{2}^{1}y_{2}}%
+\frac{v_{3}^{2}}{1+d^{2}\tilde{s}_{2}v_{2}^{2}y_{2}}.
\]

The projection matrices, $\bar{H}(Y)$ and $\tilde{H}(Y)$, of systems
\eqref{mod41a} and \eqref{mod42a} verify the hypotheses H1 and H2 of
projection matrix $P$ in Section 3 of \cite{Veprauskas17}, so Theorems 1-3
therein apply to both systems.

We now proceed to adapt the results in Section 3 of \cite{Veprauskas17} to
systems \eqref{mod41a} and \eqref{mod42a}. Thus, definitions, notation and
propositions are directly inspired from it.

In the first place we define the \textit{inherent net reproduction number}
$R_{0}$ associated to a non-linear system as the net reproduction number (NRN)
\cite{cushing1994net} of the projection matrix of the system in the absence of
density dependence, i.e., when the population vector is zero. In this way, the
inherent NRNs, $\bar{R}_{0}$ and $\tilde{R}_{0}$, of matrices $\bar{H}(Y)$ and
$\tilde{H}(Y)$ are:
\begin{equation}
\bar{R}_{0}=\frac{\bar{b}\bar{s}_{1}}{1-\bar{s}_{2}\bar{s}_{3}}\ \text{ and
}\ \tilde{R}_{0}=\frac{\tilde{b}\tilde{s}_{1}}{1-\tilde{s}_{2}\tilde{s}_{3}}.
\label{r0s}%
\end{equation}

The first result states that the inherent NRN characterizes the stability of
the extinction equilibrium of the system as well as its uniform persistence.

\begin{proposition}
\label{pr1} For system \eqref{mod41a} (resp. \eqref{mod42a}) the extinction
equilibrium $Y_{0}^{\ast}=\bar{0}=(0,0,0)^{\mathsf{T}}$ is locally
asymptotically stable if $\bar{R}_{0}<1$ (resp. $\tilde{R}_{0}<1$) and
unstable if $\bar{R}_{0}>1$ (resp. $\tilde{R}_{0}>1$). Furthermore, if
$\bar{R}_{0}>1$ (resp. $\tilde{R}_{0}>1$) then the system \eqref{mod41a}
(resp. \eqref{mod42a}) is uniformly persistent with respect to $Y_{0}^{\ast}$.
\end{proposition}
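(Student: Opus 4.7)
The plan is to apply Theorems 1 and 3 of \cite{Veprauskas17} to the reduced systems \eqref{mod41a} and \eqref{mod42a}, as the text immediately preceding the statement already advertises. The work therefore reduces to three pieces: (a) checking that the projection matrices $\bar H(Y)$ and $\tilde H(Y)$ satisfy hypotheses H1--H2 of Section 3 of \cite{Veprauskas17}; (b) identifying $\bar R_0$ (resp.\ $\tilde R_0$) with the inherent net-reproductive number of $\bar H(\bar 0)$ (resp.\ $\tilde H(\bar 0)$); and (c) translating the conclusions of those theorems back into our notation.

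For (a), one checks that $\bar h_1$ and $\bar h_2$ (and analogously $\tilde h_1$, $\tilde h_2$) are $C^1$ positive functions of $y_2$ alone, decreasing, with $\bar h_1(0)=\bar h_2(0)=1$, since the normalising factors $\bar b$ and $\bar s_3$ in their definitions are exactly the sums that make the two partial fractions add up to one at $y_2=0$. Thus each projection matrix has the Leslie-with-adult-delay structure required in \cite{Veprauskas17}, and the inherent matrix is primitive because the nonzero entry $\bar s_3 (1-\bar h_2(0))$ is replaced, once $y_2>0$, by a positive $\bar s_3(1-\bar h_2(y_2))$, breaking the bipartite cycle between juveniles and active adults that would otherwise force imprimitivity. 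For (b), substituting $\bar h_1(0)=\bar h_2(0)=1$ into the projection matrix yields
\begin{equation*}
\bar H(\bar 0)=\begin{pmatrix} 0 & \bar b & 0 \\ \bar s_1 & 0 & \bar s_3 \\ 0 & \bar s_2 & 0 \end{pmatrix},
\end{equation*}
and splitting $\bar H(\bar 0)=F+T$ with $F$ carrying the single fertility entry $\bar b$ in position $(1,2)$ and $T$ the remaining survival/transition entries, a direct computation of $\rho\bigl(F(I-T)^{-1}\bigr)$ recovers the formula for $\bar R_0$ in \eqref{r0s}; the same computation gives $\tilde R_0$ for $\tilde H(\bar 0)$.

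Given (a) and (b), Theorem 1 of \cite{Veprauskas17} supplies the sign equivalence $\mathrm{sgn}(\rho(\bar H(\bar 0))-1)=\mathrm{sgn}(\bar R_0-1)$, and since the Jacobian of $Y\mapsto \bar H(Y)Y$ at $Y=\bar 0$ is exactly $\bar H(\bar 0)$, standard linearisation gives the local asymptotic stability/instability dichotomy; Theorem 3 of \cite{Veprauskas17} then yields uniform persistence with respect to $Y_0^*$ whenever $\bar R_0>1$. The main obstacle is purely the verification of H1--H2, in particular the primitivity requirement and the admissible form of the density-dependent factors, since the persistence theorem in \cite{Veprauskas17} relies on both primitivity of the inherent matrix and boundedness of forward orbits; the latter follows easily from the boundedness of $\bar h_1,\bar h_2$ together with a routine Lyapunov bound on $y_1+y_2+y_3$. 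The argument for \eqref{mod42a} is identical after replacing $(\bar s_i,\bar b,\bar h_j)$ with $(\tilde s_i,\tilde b,\tilde h_j)$ throughout.
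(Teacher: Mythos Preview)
Your overall strategy---verify that $\bar H(Y)$ and $\tilde H(Y)$ fit the framework of \cite{Veprauskas17} and then invoke the relevant theorem there---is exactly what the paper does; the paper's own proof is the single sentence ``The result is a direct consequence of Theorem 1 in \cite{Veprauskas17}.''  Your computation of $\bar H(\bar 0)$ and the recovery of $\bar R_0=\rho\bigl(F(I-T)^{-1}\bigr)$ are correct, and the observation $\bar h_1(0)=\bar h_2(0)=1$ is right.

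There is, however, a genuine confusion in part (a) that you should fix.  You write that ``the inherent matrix is primitive because the nonzero entry $\bar s_3(1-\bar h_2(0))$ is replaced, once $y_2>0$, by a positive $\bar s_3(1-\bar h_2(y_2))$.''  But $\bar h_2(0)=1$, so $\bar s_3(1-\bar h_2(0))=0$; the $(3,3)$ entry of $\bar H(\bar 0)$ vanishes and the inherent matrix
\[
\bar H(\bar 0)=\begin{pmatrix}0&\bar b&0\\ \bar s_1&0&\bar s_3\\ 0&\bar s_2&0\end{pmatrix}
\]
is \emph{imprimitive} (irreducible with index of imprimitivity $2$, as its associated digraph is bipartite on $\{1,3\}\cup\{2\}$).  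The paper states this explicitly just after the proposition: ``The fact that matrix $\bar H(\bar 0)$ is not primitive gives rise to the simultaneous bifurcation \ldots''.  The model in \cite{Veprauskas17} is built precisely around this imprimitive structure, so hypotheses H1--H2 there do not require primitivity of the inherent matrix; your worry that ``the persistence theorem in \cite{Veprauskas17} relies on \ldots\ primitivity of the inherent matrix'' is misplaced.  Once you drop the erroneous primitivity claim, your argument collapses to the paper's one-line proof: everything is already packaged in Theorem~1 of \cite{Veprauskas17} (the paper does not need to invoke a separate Theorem~3 for persistence).
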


\begin{proof}
The result is a direct consequence of Theorem 1 in \cite{Veprauskas17}.
\end{proof}

In the sequel we concentrate on system \eqref{mod41a} and we develop some
asymptotic results when $\bar{R}_{0}$ increases through 1. That entails, as
shown in Proposition \ref{pr1}, the destabilization of the extinction
equilibrium $Y_{0}^{\ast}=\bar{0}$. The fact that matrix $\bar{H}(\bar{0})$ is
not primitive gives rise to the simultaneous bifurcation of positive
equilibria and non-negative 2-cycles \cite{Cushing15}.

We \ say that $(\bar{R}_{0},\bar{Y})\in\mathbf{R}_{+}\times\mathbf{R}_{+}^{3}$
is an equilibrium pair for system \eqref{mod41a} when $\bar{Y}$ is an
equilibrium for the system and the inherent NRN is $\bar{R}_{0}$. Clearly
$(\bar{R}_{0},\bar{0})$ is an equilibrium pair for all values of $\bar{R}_{0}$.

Clearly, system \eqref{mod41a} has periodic orbits on the boundary of the
positive cone of the following form
\[
\bar{Y}_{2}=(0,y_{2},0)^{\mathrm{T}},\bar{Y}_{1,3}=(y_{1},0,y_{3}%
)^{\mathrm{T}}%
\]
for certain values of $y_{1},y_{2},y_{3}>0$. They are called synchronous
2-cycles and are represented by their two points $(\bar{Y}_{2},\bar{Y}_{1,3}%
)$. We call $\left(  \bar{R}_{0},(\bar{Y}_{2},\bar{Y}_{1,3})\right)  $ a
synchronous 2-cycle pair of system \eqref{mod41a} if $(\bar{Y}_{2},\bar
{Y}_{1,3})$ is a synchronous 2-cycle for the associated value of $\bar{R}_{0}$.

The declared aim of the model in \cite{Veprauskas17} is analyzing the
reproductive synchrony of adults. Concerning that, the synchronous 2-cycles
represent reproductive synchrony, i.e., all adults reproduce simultaneously in
only one of the two points of the cycle, whereas the positive equilibria
represent reproductive asynchrony, i.e., there are reproducing adults at each
point of time. Through models \eqref{mod41} and \eqref{mod42} we can study how
dispersal affect reproductive synchrony. At the same time, we are interested
in analyzing whether choosing one model versus the other could result in
different outcomes.

In the next result, conditions for the existence and stability of equilibrium
and synchronous 2-cycle pairs are obtained. To do so, we define the following
four quantities
\begin{equation}
\bar{c}_{w}:=(1-\bar{s}_{2}\bar{s}_{3})\bar{s}_{1}\bar{h}_{1}^{\prime
}(0),\ \bar{c}_{b}:=\bar{s}_{1}\bar{s}_{2}\bar{s}_{3}(1-\bar{s}_{3})\bar
{h}_{2}^{\prime}(0) \label{pr2-1}%
\end{equation}
and
\begin{equation}
\bar{a}_{+}:=\bar{c}_{w}+\bar{c}_{b},\ \bar{a}_{-}:=\bar{c}_{w}-\bar{c}_{b}
\label{pr2-2}%
\end{equation}

\begin{proposition}
\label{pr2} For system \eqref{mod41a}:

\begin{enumerate}
\item A continuum $\bar{C}_{e}$ of positive equilibrium pairs bifurcates from
the extinction equilibrium pair $(\bar{R}_{0},\bar{Y})=(1,\bar{0})$. In a
neighbourhood of $(1,\bar{0})$, the positive equilibrium pairs $(\bar{R}%
_{0},\bar{Y})\in\bar{C}_{e}$ have, for $0<\varepsilon\ll1$, the
parameterizations
\[
\bar{R}_{0}(\varepsilon)=1-\frac{\bar{a}_{+}}{1-\bar{s}_{2}\bar{s}_{3}%
}\varepsilon+O(\varepsilon^{2}),\ \bar{Y}(\varepsilon)=\left(
\begin{array}
[c]{c}%
1-\bar{s}_{2}\bar{s}_{3}\\
\bar{s}_{1}\\
\bar{s}_{1}\bar{s}_{2}\\
\end{array}
\right)  \varepsilon+O(\varepsilon^{2}).
\]

\item A continuum $\bar{C}_{2}$ of synchronous 2-cycles pairs bifurcates from
the extinction equilibrium pair $(\bar{R}_{0},\bar{Y})=(1,\bar{0})$. In a
neighbourhood of $(1,\bar{0})$, the synchronous 2-cycles pairs $(\bar{R}%
_{0},(\bar{Y}_{2},\bar{Y}_{1,3}))\in\bar{C}_{2}$ have, for $0<\varepsilon\ll
1$, the parameterizations
\[
\bar{R}_{0}(\varepsilon)=1-\frac{\bar{c}_{w}}{1-\bar{s}_{2}\bar{s}_{3}%
}\varepsilon+O(\varepsilon^{2}),
\]%
\[
\bar{Y}_{2}(\varepsilon)=\left(
\begin{array}
[c]{c}%
0\\
\bar{s}_{1}\\
0\\
\end{array}
\right)  \varepsilon+O(\varepsilon^{2}),\ \bar{Y}_{1,3}(\varepsilon)=\left(
\begin{array}
[c]{c}%
1-\bar{s}_{2}\bar{s}_{3}\\
0\\
\bar{s}_{1}\bar{s}_{2}\\
\end{array}
\right)  \varepsilon+O(\varepsilon^{2}).
\]

\item
\begin{enumerate}
\item If $\bar{a}_{-}<0$ the equilibria of the pairs in $\bar{C}_{e}$ are
locally asymptotically stable and the 2-cycles of the pairs in $\bar{C}_{2}$
are unstable.

\item If $\bar{a}_{-}>0$ the equilibria of the pairs in $\bar{C}_{e}$ are
unstable and the 2-cycles of the pairs in $\bar{C}_{2}$ are locally
asymptotically stable.
\end{enumerate}
\end{enumerate}
\end{proposition}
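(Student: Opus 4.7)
The plan is to invoke Theorems~2 and~3 of \cite{Veprauskas17}, which give a complete bifurcation and stability dichotomy for three-stage non-linear matrix models whose inherent projection matrix has the imprimitive pattern of $\bar{H}(\bar{0})$. The text just preceding the proposition already observes that $\bar{H}(Y)$ satisfies hypotheses H1 and H2 of \cite{Veprauskas17}, so the bifurcation machinery applies verbatim; the work reduces to translating the abstract coefficients produced by those theorems into the explicit quantities $\bar{c}_w, \bar{c}_b, \bar{a}_{\pm}$ defined in (\ref{pr2-1})--(\ref{pr2-2}).

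First I would check that the bifurcation threshold is $\bar{R}_0 = 1$. Since $\bar{h}_1(0) = \bar{h}_2(0) = 1$ by direct evaluation, the characteristic polynomial of $\bar{H}(\bar{0})$ reduces to $\lambda(\lambda^2 - \bar{s}_2\bar{s}_3 - \bar{b}\bar{s}_1)$, so $\bar{R}_0 = 1$ is equivalent to $\pm 1$ being simultaneously eigenvalues. This imprimitivity produces the simultaneous branching of the equilibrium continuum $\bar{C}_e$ from $+1$ and of the synchronous 2-cycle continuum $\bar{C}_2$ from $-1$. Solving the eigenvector equations yields $V_\pm = (1-\bar{s}_2\bar{s}_3,\,\pm\bar{s}_1,\,\bar{s}_1\bar{s}_2)^{\mathsf T}$, which are precisely the leading-order directions in the stated parameterizations: $\bar{Y}$ points along $V_+$, while the two phase points of the 2-cycle arise as the combinations $V_+ \mp V_-$, giving $\bar{Y}_2 \propto (0, \bar{s}_1, 0)^{\mathsf T}$ and $\bar{Y}_{1,3} \propto (1-\bar{s}_2\bar{s}_3, 0, \bar{s}_1\bar{s}_2)^{\mathsf T}$.

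The main computational step, and the principal obstacle, is identifying the scalar correction $\beta_1$ in $\bar{R}_0(\varepsilon) = 1 + \varepsilon\beta_1 + O(\varepsilon^2)$ produced by a standard Lyapunov--Schmidt expansion. Because $\bar{H}(Y)$ depends on $Y$ only through $y_2$, the quadratic contribution to the fixed-point equation $Y = \bar{H}(Y)Y$ is a single term proportional to $\bar{h}_1'(0)$ and $\bar{h}_2'(0)$. Pairing this term with the left eigenvector $L_+ = (\bar{s}_1, 1, \bar{s}_3)$ of $\bar{H}(\bar{0})$ for $+1$, normalising by the derivative of $\bar{H}(\bar{0})$ with respect to $\bar{R}_0$, and using $\bar{b}\bar{s}_1 = 1 - \bar{s}_2\bar{s}_3$ at the bifurcation point yields $\beta_1 = -\bar{a}_+/(1-\bar{s}_2\bar{s}_3)$; the factor $(1-\bar{s}_3)$ inside $\bar{c}_b$ emerges from the cancellation between the second and third components of the quadratic term, while $(1-\bar{s}_2\bar{s}_3)$ reflects the adult loop. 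For the 2-cycle branch, the analogous pairing is performed on $\bar{H}^{(2)}$ with the left eigenvector for $-1$, which cancels the $\bar{c}_b$ contribution and leaves only $\bar{c}_w$.

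Finally, for part (3), Theorem~3 of \cite{Veprauskas17} expresses the dominant eigenvalue of the linearisation along each bifurcating branch as $1 \mp \bar{a}_-\varepsilon + O(\varepsilon^2)$, with opposite signs on the equilibrium and 2-cycle branches. Hence the two branches exchange stability as $\bar{a}_-$ crosses $0$, giving the stated dichotomy; no further dynamical argument is required beyond the coefficient identification above.
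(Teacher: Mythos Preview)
Your approach is essentially the same as the paper's: both invoke Theorems~2 and~3 of \cite{Veprauskas17} after noting that the hypotheses H1 and H2 there are satisfied by $\bar{H}(Y)$. The paper's proof is in fact the single sentence ``It is a direct consequence of Theorems~2 and~3 in \cite{Veprauskas17}'', so your additional sketch of the eigenvector and Lyapunov--Schmidt computations goes well beyond what the paper provides, but it is consistent with what those cited theorems deliver.
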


\begin{proof}
It is a direct consequence of Theorems 2 and 3 in \cite{Veprauskas17}.
\end{proof}

The previous result refers to system \eqref{mod41a}. Since systems
\eqref{mod41a} and \eqref{mod42a} have the same functional form, an analogous
result holds for the latter. In particular, it is the sign of the quantity
\begin{equation}
\tilde{a}_{-}:=(1-\tilde{s}_{2}\tilde{s}_{3})\tilde{s}_{1}\tilde{h}%
_{1}^{\prime}(0)-\tilde{s}_{1}\tilde{s}_{2}\tilde{s}_{3}(1-\tilde{s}%
_{3})\tilde{h}_{2}^{\prime}(0) \label{pr2-3}%
\end{equation}
which decides the asymptotic stability of either the equilibria of the pairs
in $\tilde{C}_{e}=(\tilde{R}_{0},\tilde{Y})$ ($\tilde{a}_{-}<0$) or the
synchronous 2-cycles of the pairs in $\tilde{C}_{2}=\left(  \tilde{R}%
_{0},(\tilde{Y}_{2},\tilde{Y}_{1,3})\right)  $ ($\tilde{a}_{-}>0$) that
bifurcate from the extinction equilibrium pair $(\tilde{R}_{0},\tilde
{Y})=(1,\bar{0})$.

The results in Sections \ref{sec2} and \ref{sec3} justify that the previous
results regarding the reduced systems \eqref{mod41a} and \eqref{mod42a} can be used to
obtain information about  the local dynamics of the original two time scales models
\eqref{mod41} and \eqref{mod42}.

In Figure \ref{fig10} a simulation regarding the relationship between systems
\eqref{mod42a} and \eqref{mod42} is shown, illustrating the fact that, for
large enough values of $k$, the reduced system \eqref{mod42a} is a good
approximation of the complete system \eqref{mod42}. Starting from the same
initial condition we have simulated system \eqref{mod42a} and system
\eqref{mod42} for three different values of $k$. The reduced system
\eqref{mod42a} presents a 2-cycle and so does the complete system
\eqref{mod42} for the three values of $k$. For $k=10$, the orbit of the
complete system is approximated very closely by that corresponding to the reduced system.


\begin{figure}[h]
\centering
\includegraphics[width=0.9\textwidth]{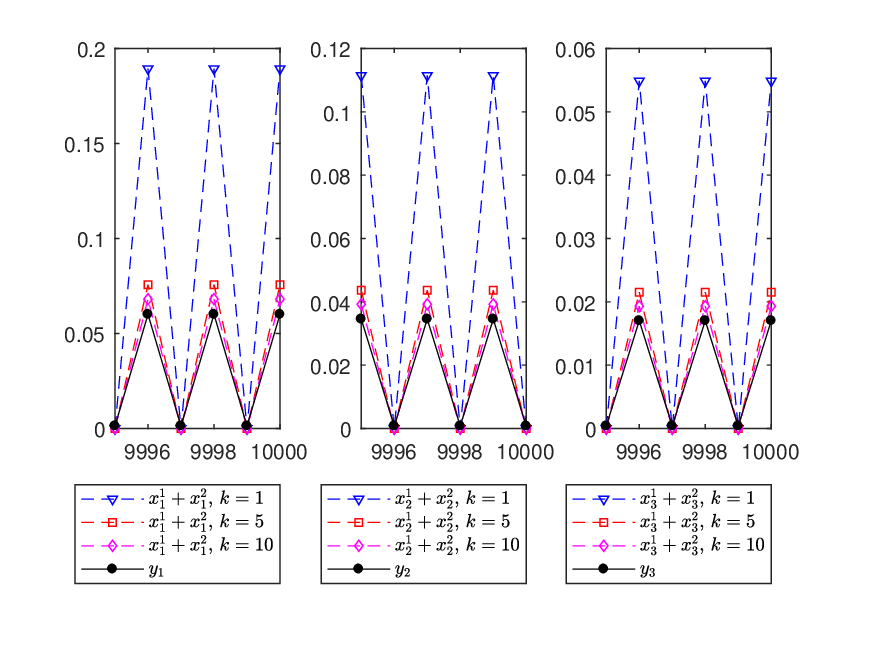}
\caption{Comparison between the complete and the reduced systems.
Total population corresponding to certain initial conditions for
system \eqref{mod42a} and for system \eqref{mod42} for three different values
of $k$, i.e., $k\in\{1,5,10\}$. The simulations have been run until time
$t=10^{4}$ and only the last 6 times are shown. \newline Parameter values:
$s_{1}^{1}=0.3$, $s_{2}^{1}=0.47$, $s_{3}^{1}=0.7$, $s_{1}^{2}=s_{2}^{2}%
=s_{3}^{2}=0.5$, $\phi^{1}=3.8$, $\phi^{2}=3.5$, $c_{1}=c_{2}=1$, $d_{1}=5.3$,
$d_{2}=5.6$, $v_{1}^{1}=0.3$, $v_{2}^{1}=0.25$, $v_{3}^{1}=0.1$. Initial
condition: $X(0)=(0.02,0.02,0.05,0.05,0.02,0.02)$. }
\label{fig10}
\end{figure}

\subsection{The effect of fast dispersal}

\label{sec41} The effect of dispersal in populations and in particular its
effect at a global level when local isolated populations are connected, has
considerable biological interest and has been addressed in a number of
different contexts (see
\cite{jang2000equilibrium,yakubu2008asynchronous,franco2015connect} among
others). In this section we deal with models (\ref{mod41}) and (\ref{mod42})
and present some results that show how dispersal can change the two main
asymptotic issues regarding the dynamics of the population. The first one is
the survival of the population as reflected by the extinction equilibrium
stability. Then, once the survival threshold is attained, we address the
second issue, i.e., the tendency of the population towards either reproductive
synchrony or asynchrony, represented by the stability of synchronous 2-cycles
or positive equilibria, respectively.

We compare the outcomes of local (isolated sites) and global (dispersal
interconnected sites) population dynamics. The reduction procedure applied to
systems \eqref{mod41} and \eqref{mod42} allows us to use the corresponding
reduced systems \eqref{mod41a} and \eqref{mod42a} to study their asymptotic
behaviour. So, we use quantities $\bar{R}_{0}$ (resp. $\tilde{R}_{0}$) and
$\bar{a}_{-}$ (resp. $\tilde{a}_{-}$) to characterize, respectively, survival
and reproductive synchrony at the global level in the two systems. The
influence of fast dispersal in the population dynamics is reflected through
the values $v_{i}^{\alpha}$ that appear in the coefficients of matrices
$\bar{H}(Y)$ \eqref{mod41a} and $\tilde{H}(Y)$ \eqref{mod42a}. We recall that
$V_{i}=(v_{i}^{1},v_{i}^{2})^{\mathsf{T}}$ ($i=1,2,3$) is the dominant
probability normed eigenvector of matrix $M_{i}$ and therefore its components
represent the equilibrium distribution of dispersal for individuals of class
$i$\textit{, }i.e, $v_{i}^{j}$ is the proportion of individuals of class $i$
present in patch $j$ after dispersal has reached equilibrium. Their value is%
\[
v_{i}^{1}=\frac{q_{i}}{p_{i}+q_{i}},\ v_{i}^{2}=\frac{p_{i}}{p_{i}+q_{i}},
\]
where $p_{i}$ (resp. $q_{i})$ is the migration rate from patch one to patch
two (resp. from patch two to patch one) for individuals of class $i$.

At the local level, i.e., if patches $\alpha=1,2$ are considered isolated, the
matrix representing the population dynamics in each of them is
\begin{equation}
\mathbf{D}^{\alpha}(Y)=\left(
\begin{array}
[c]{ccc}%
0 & s_{2}^{\alpha}\phi^{\alpha}\dfrac{1}{1+c^{\alpha}y_{2}^{\alpha}} & 0\\
s_{1}^{\alpha} & 0 & s_{3}^{\alpha}\dfrac{1}{1+d^{\alpha}y_{2}^{\alpha}}\\
0 & s_{2}^{\alpha} & s_{3}^{\alpha}\left(  1-\dfrac{1}{1+d^{\alpha}%
y_{2}^{\alpha}}\right) \\
&  &
\end{array}
\right)  \label{pr3-1}%
\end{equation}

\noindent\textbf{Effect of dispersal on extinction.} The stability of the
extinction equilibrium is locally determined by the corresponding inherent
NRR
\begin{equation}
R_{0}^{\alpha}:=\frac{\phi^{\alpha}s_{1}^{\alpha}s_{2}^{\alpha}}%
{1-s_{2}^{\alpha}s_{3}^{\alpha}},\ \alpha=1,2, \label{pr3-2}%
\end{equation}
and the tendency towards reproductive synchrony or asynchrony by
\begin{equation}
a_{-}^{\alpha}:=-(1-s_{2}^{\alpha}s_{3}^{\alpha})s_{1}^{\alpha}c^{\alpha
}+s_{1}^{\alpha}s_{2}^{\alpha}s_{3}^{\alpha}(1-s_{3}^{\alpha})d^{\alpha
},\ ~\alpha=1,2. \label{pr3-3}%
\end{equation}

If we consider homogeneous sites concerning survival and fertility, i.e.,
$s_{i}^{1}=s_{i}^{2}=:s_{i}$ for $i=1,2,3$ and $\phi^{1}=\phi^{2}=:\phi$, we
obtain that
\begin{equation}
R_{0}^{1}=R_{0}^{2}=\bar{R}_{0}=\tilde{R}_{0}=\frac{\phi s_{1}s_{2}}%
{1-s_{2}s_{3}}, \label{inh21}%
\end{equation}
so that, in particular, the population survives or gets extinct locally if and
only if it does globally.

On the other hand, differences in local parameters, i.e., $s_{1}^{1}$ vs.
$s_{1}^{2}$ and $\phi^{1}$ vs. $\phi^{2},$ can result in different survival
outcomes at the local and the global levels provided appropriate dispersal
rates are chosen. In the next result we show that under certain conditions,
adequate dispersal strategies can transform the local population extinction in
isolated patches into global survival.

\begin{proposition}
\label{pr3} Let us assume $s_{i}^{1}=s_{i}^{2}=:s_{i}$ for $i=2,3$, $R_{0}%
^{1}<1$ and $R_{0}^{2}<1$. If $\max\{s_{1}^{1}\phi^{2},s_{1}^{2}\phi
^{1}\}>(1-s_{2}s_{3})/s_{2}$ then there exist intervals $I_{1},I_{2}%
\subseteq\lbrack0,1]$ (resp. $\tilde{I}_{1},\tilde{I}_{2}\subseteq\lbrack
0,1]$) such that $\bar{R}_{0}>1$ (resp. $\tilde{R}_{0}>1$) for $v_{1}^{1}\in
I_{1}$ and $v_{2}^{1}\in I_{2}$ (resp. $v_{1}^{1}\in\tilde{I}_{1}$ and
$v_{2}^{1}\in\tilde{I}_{2}$).
\end{proposition}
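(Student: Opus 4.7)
The plan is to express the global inherent net reproduction numbers $\bar{R}_0$ and $\tilde{R}_0$ as continuous functions of the dispersal-equilibrium parameters $v_1^1$ and $v_2^1$, and then show that they exceed $1$ at a corner of the unit square $[0,1]^2$ under the hypothesis. Continuity then yields the desired intervals.

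First I would exploit the homogeneity assumption $s_i^1=s_i^2=:s_i$ for $i=2,3$ together with $v_i^1+v_i^2=1$ to simplify the coefficients of the reduced systems \eqref{mod41a} and \eqref{mod42a}. For \eqref{mod41a}, $\bar{s}_2=s_2$ and $\bar{s}_3=s_3$, so that $\bar{b}=s_2(\phi^1 v_2^1+\phi^2 v_2^2)$ and
\[
\bar{R}_0 \;=\; \frac{s_2\,\bigl(\phi^1 v_2^1+\phi^2 v_2^2\bigr)\bigl(s_1^1 v_1^1+s_1^2 v_1^2\bigr)}{1-s_2 s_3}.
\]
Similarly, for \eqref{mod42a} the identity $\tilde{s}_i=(s_i^1)^{v_i^1}(s_i^2)^{v_i^2}$ reduces to $\tilde{s}_2=s_2$ and $\tilde{s}_3=s_3$, so
\[
\tilde{R}_0 \;=\; \frac{s_2\,\bigl(\phi^1 v_2^1+\phi^2 v_2^2\bigr)\,(s_1^1)^{v_1^1}(s_1^2)^{v_1^2}}{1-s_2 s_3}.
\]
Both expressions depend continuously on $(v_1^1,v_2^1)\in[0,1]^2$ once $v_i^2$ is replaced by $1-v_i^1$.

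Next I would use the hypothesis $\max\{s_1^1\phi^2,s_1^2\phi^1\}>(1-s_2s_3)/s_2$ and, without loss of generality, assume $s_1^1\phi^2>(1-s_2s_3)/s_2$ (the symmetric case follows by interchanging the roles of the patches, evaluating at the opposite corner). Evaluating at the corner $(v_1^1,v_2^1)=(1,0)$, corresponding to all class-1 individuals asymptotically located in patch 1 (where juvenile survival is highest) and all class-2 individuals in patch 2 (where fecundity is highest), gives
\[
\bar{R}_0(1,0) \;=\; \tilde{R}_0(1,0) \;=\; \frac{s_2\,s_1^1\,\phi^2}{1-s_2 s_3} \;>\; 1.
\]

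Finally, by continuity of $\bar{R}_0$ and $\tilde{R}_0$ in $(v_1^1,v_2^1)$ on $[0,1]^2$, the open set $\{\bar{R}_0>1\}$ contains a product neighborhood of $(1,0)$; intersecting with $[0,1]^2$ yields intervals $I_1\subseteq[0,1]$ containing $1$ and $I_2\subseteq[0,1]$ containing $0$ on which $\bar{R}_0>1$. The same argument applied to $\tilde{R}_0$ gives the corresponding intervals $\tilde{I}_1,\tilde{I}_2$. In the symmetric case $s_1^2\phi^1>(1-s_2s_3)/s_2$ we evaluate at $(0,1)$ and obtain intervals near the opposite corner. There is no serious obstacle here; the main subtlety is simply verifying that the homogeneity in stages 2 and 3 causes the two survival structures $\bar{s}_i$ (arithmetic mean) and $\tilde{s}_i$ (weighted geometric mean) to coincide in the denominator $1-s_2s_3$, so that the same corner evaluation works simultaneously for both models.
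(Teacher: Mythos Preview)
Your proposal is correct and follows essentially the same approach as the paper: simplify $\bar{R}_0$ and $\tilde{R}_0$ using the homogeneity $s_i^1=s_i^2$ for $i=2,3$, evaluate at the corner $(v_1^1,v_2^1)=(1,0)$ (under the WLOG assumption on which cross-product achieves the max), and invoke continuity. The only cosmetic difference is that the paper first bounds $\bar{R}_0$ below by $\dfrac{s_2}{1-s_2s_3}\,v_1^1(1-v_2^1)s_1^1\phi^2$ and then evaluates this lower bound at the corner, whereas you evaluate $\bar{R}_0$ itself directly there; both yield the same value and the same conclusion.
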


\begin{proof}
Let us assume, without loss of generality, that $s^{1}_{1} \phi^{2} \geq
s^{2}_{1}\phi^{1}$ and use that $v_{i}^{2} = 1-v_{i}^{1}$ for $i=1,2,3$ to
obtain
\[%
\begin{array}
[c]{rcl}%
\bar{R}_{0} & = & \dfrac{s_{2}}{1-s_{2}s_{3}}\left(  v_{1}^{1}s^{1}%
_{1}+(1-v_{1}^{1})s^{2}_{1}\right)  \left(  v_{2}^{1}\phi^{1}+(1-v_{2}%
^{1})\phi^{2}\right) \\
& > & \dfrac{s_{2}}{1-s_{2}s_{3}}v_{1}^{1}(1-v_{2}^{1})s^{1}_{1}\phi^{2}.
\end{array}
\]
This last expression considered as function of variables $v_{1}^{1}$ and
$v_{2}^{1}$ is continuous and takes the value $s^{1}_{1}\phi^{2}s_{2}%
/(1-s_{2}s_{3})>1$ for $v_{1}^{1}=1 $ and $v_{2}^{1}=0$. This yields the
existence of the intervals $I_{1},I_{2}\subseteq[0,1]$ that ensure $\bar
{R}_{0}>1$ for $(v_{1}^{1},v_{2}^{1})\in I_{1}\times I_{2}$.

The proof for $\tilde{R}_{0}>1$ is analogous.
\end{proof}

The previous result says that given certain (common) adult survival rates, if
the juvenile survival rate in one of the patches together with the fertility
rate in the other are large enough to sustain the population, then there exist
appropriate dispersal rates to compensate poor
local survival conditions.

The opposite result also holds. Adequately chosen dispersal rates, under
appropriate conditions, can make two isolated viable populations go extinct
when they are connected.

\begin{proposition}
\label{pr4} Let us assume $s_{i}^{1}=s_{i}^{2}=s_{i}$ for $i=2,3$, $R_{0}%
^{1}>1$ and $R_{0}^{2}>1$. If $\min\{s_{1}^{1}\phi^{2},s_{1}^{2}\phi
^{1}\}<(1-s_{2}s_{3})/s_{2}$ then there exist intervals $I_{1},I_{2}%
\subseteq\lbrack0,1]$ (resp. $\tilde{I}_{1},\tilde{I}_{2}\subseteq\lbrack
0,1]$) such that $\bar{R}_{0}<1$ (resp. $\tilde{R}_{0}<1$) for $v_{1}^{1}\in
I_{1}$ and $v_{2}^{1}\in I_{2}$ (resp. $v_{1}^{1}\in\tilde{I}_{1}$ and
$v_{2}^{1}\in\tilde{I}_{2}$).
\end{proposition}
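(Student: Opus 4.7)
The plan is to mirror, in reverse, the argument used for Proposition~\ref{pr3}: I would exhibit a specific corner of the square $(v_1^1, v_2^1) \in [0,1]^2$ at which a direct evaluation of the inherent net reproduction number falls strictly below $1$, and then invoke continuity to obtain non-degenerate intervals around that corner.

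First I would use $v_i^2 = 1-v_i^1$ together with the hypothesis $s_i^1 = s_i^2 =: s_i$ for $i=2,3$ to write
\begin{equation*}
\bar{R}_0(v_1^1, v_2^1) = \frac{s_2}{1-s_2 s_3}\bigl(v_1^1 s_1^1 + (1-v_1^1) s_1^2\bigr)\bigl(v_2^1 \phi^1 + (1-v_2^1)\phi^2\bigr).
\end{equation*}
Under the same assumptions one has $\tilde{s}_2 = s_2$, $\tilde{s}_3 = s_3$ and $\tilde{s}_1 = (s_1^1)^{v_1^1}(s_1^2)^{1-v_1^1}$, so the analogous expression for $\tilde{R}_0$ replaces only the arithmetic mean of $s_1^1, s_1^2$ by its geometric mean. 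Both maps are manifestly continuous in $(v_1^1, v_2^1)$ on the compact square $[0,1]^2$.

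Assuming without loss of generality that the minimum in the hypothesis is attained at $s_1^1 \phi^2$, so $s_1^1 \phi^2 < (1-s_2 s_3)/s_2$, the natural corner to test is $(v_1^1, v_2^1) = (1,0)$. At this point both the arithmetic and the geometric mean collapse to $s_1^1$, the fecundity bracket collapses to $\phi^2$, and
\begin{equation*}
\bar{R}_0(1,0) \;=\; \tilde{R}_0(1,0) \;=\; \frac{s_2\, s_1^1\, \phi^2}{1-s_2 s_3} \;<\; 1.
\end{equation*}
The symmetric case in which the minimum is $s_1^2 \phi^1$ is handled identically by evaluating at the opposite corner $(0,1)$.

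Finally, by continuity of $\bar{R}_0$ and $\tilde{R}_0$ on $[0,1]^2$, the strict inequality persists on half-open intervals $I_1 = (1-\varepsilon, 1]$ and $I_2 = [0, \varepsilon)$ for some $\varepsilon > 0$, and likewise for $\tilde{I}_1, \tilde{I}_2$; this delivers the desired statement. There is no genuine technical obstacle here: the content of the proposition is precisely the book-keeping observation that the hypothesis $\min\{s_1^1 \phi^2,\, s_1^2 \phi^1\} < (1-s_2 s_3)/s_2$ selects the corner of the $(v_1^1, v_2^1)$-square at which the bilinear (respectively, mixed arithmetic–geometric) form representing $\bar{R}_0$ (respectively, $\tilde{R}_0$) drops below the survival threshold, exactly the mirror image of the corner selected in the proof of Proposition~\ref{pr3}.
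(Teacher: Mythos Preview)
Your proposal is correct and follows essentially the same approach the paper intends: the paper's proof of Proposition~\ref{pr4} simply reads ``Analogous to the proof of Proposition~\ref{pr3},'' and your argument---evaluate $\bar{R}_0$ (resp.\ $\tilde{R}_0$) at the corner $(v_1^1,v_2^1)=(1,0)$ or $(0,1)$ selected by the minimum hypothesis, observe it falls strictly below $1$, and invoke continuity---is precisely that analogue. If anything, your direct evaluation at the corner is slightly tidier than the lower-bound step used in the paper's proof of Proposition~\ref{pr3}, but the idea is the same.
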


\begin{proof}
Analogous to the proof of Proposition \ref{pr3}.
\end{proof}

\vspace{3ex}

\noindent\textbf{Effect of dispersal on reproductive synchrony.} We now
illustrate the influence of fast dispersal on the population's reproductive
synchrony. We do so in the particular case of homogenous patches, i.e., when
all demographic parameters are the same in both patches:
\begin{equation}
s_{i}^{1}=s_{i}^{2}=:s_{i}\text{ for }i=1,2,3;\ \phi^{1}=\phi^{2}%
=:\phi;\ c^{1}=c^{2}=:c;\ d^{1}=d^{2}=:d. \label{pr5-1}%
\end{equation}
Thus, the demographic local matrices coincide
\begin{equation}
\mathbf{D}^{1}(Y)=\mathbf{D}^{2}(Y)=\left(
\begin{array}
[c]{ccc}%
0 & s_{2}\phi\dfrac{1}{1+cy_{2}} & 0\\
s_{1} & 0 & s_{3}\dfrac{1}{1+dy_{2}}\\
0 & s_{2} & s_{3}\left(  1-\dfrac{1}{1+dy_{2}}\right) \\
&  &
\end{array}
\right)  , \label{eq111}%
\end{equation}
but, due to the dispersal rates, are different from the demographic global
matrices for systems \eqref{mod41a} and \eqref{mod42a}
\begin{equation}
\rule{-3ex}{0ex}\bar{H}(Y)=\left(
\begin{array}
[c]{ccc}%
0 & s_{2}\phi\left(  \dfrac{v_{2}^{1}}{1+cv_{2}^{1}y_{2}}+\dfrac{v_{2}^{2}%
}{1+cv_{2}^{2}y_{2}}\right)  & 0\\
\rule{0ex}{4ex}s_{1} & 0 & s_{3}\left(  \dfrac{v_{3}^{1}}{1+dv_{2}^{1}y_{2}%
}+\dfrac{v_{3}^{2}}{1+dv_{2}^{2}y_{2}}\right) \\
\rule{0ex}{4ex}0 & s_{2} & s_{3}\left(  1-\left(  \dfrac{v_{3}^{1}}%
{1+dv_{2}^{1}y_{2}}+\dfrac{v_{3}^{2}}{1+dv_{2}^{2}y_{2}}\right)  \right) \\
&  &
\end{array}
\right)  \label{eq112}%
\end{equation}
and
\[
\rule{-6ex}{0ex}\tilde{H}(Y)=\left(
\begin{array}
[c]{ccc}%
0 & s_{2}\phi\left(  \dfrac{v_{2}^{1}}{1+cs_{2}v_{2}^{1}y_{2}}+\dfrac
{v_{2}^{2}}{1+cs_{2}v_{2}^{2}y_{2}}\right)  & 0\\
\rule{0ex}{4ex}s_{1} & 0 & s_{3}\left(  \dfrac{v_{3}^{1}}{1+ds_{2}v_{2}%
^{1}y_{2}}+\dfrac{v_{3}^{2}}{1+ds_{2}v_{2}^{2}y_{2}}\right) \\
\rule{0ex}{4ex}0 & s_{2} & s_{3}\left(  1-\left(  \dfrac{v_{3}^{1}}%
{1+ds_{2}v_{2}^{1}y_{2}}+\dfrac{v_{3}^{2}}{1+ds_{2}v_{2}^{2}y_{2}}\right)
\right) \\
&  &
\end{array}
\right)
\]

As we pointed out in (\ref{inh21}), these four matrices have the same inherent
NRNs, $R_{0}^{1}=R_{0}^{2}=\bar{R}_{0}=\tilde{R}_{0}=\phi s_{1}s_{2}%
/(1-s_{2}s_{3})$, that we assume to be larger than 1. Then, the local
reproductive synchrony at both patches is determined by the sign of
\eqref{pr3-2}
\begin{equation}
a_{-}=s_{1}s_{2}s_{3}(1-s_{3})d-(1-s_{2}s_{3})s_{1}c, \label{pr5-2}%
\end{equation}
the global reproductive synchrony in the case of system \eqref{mod41} by the
sign of $\bar{a}_{-}$ in \eqref{pr2-2}
\begin{equation}
\bar{a}_{-}=s_{1}s_{2}s_{3}(1-s_{3})d\left(  v_{2}^{1}v_{3}^{1}+(1-v_{2}%
^{1})(1-v_{3}^{1})\right)  -(1-s_{2}s_{3})s_{1}c\left(  (v_{2}^{1}%
)^{2}+(1-v_{2}^{1})^{2}\right)  , \label{pr5-3}%
\end{equation}
and in the case of system \eqref{mod42} by the sign of $\tilde{a}_{-}$
\eqref{pr2-3}. Note that $\tilde{a}_{-}=s_{2}\bar{a}$ and so the sign of
$\tilde{a}_{-}$ coincides with that of $\bar{a}_{-}$, so that we will
concentrate on the sign of $\bar{a}_{-}$.

Therefore, in order to illustrate the influence of fast dispersal on
reproductive synchrony for both systems \eqref{mod41a} and \eqref{mod42a}, we
have to find conditions on $v_{2}^{1}$ and $v_{3}^{1}$ (note that $v_{1}^{1}$
plays no role whatsoever) such that $a_{-}$ and $\bar{a}_{-}$ have different
signs. In that way we show that adequately chosen adult dispersal rates can
alter the tendency to reproductive synchrony or asynchrony of isolated populations.

A first result shows that it is always possible to find adult dispersal rates
such that there exists global reproductive asynchrony, $\bar{a}_{-}<0$,
independently of the local tendency.

\begin{proposition}
\label{pr5} Let the model coefficients verify conditions \eqref{pr5-1}. Then
there exist intervals $I_{2},I_{3}\subseteq\lbrack0,1]$ such that $\bar{a}_{-}<0$
for $(v_{2}^{1}\in I_{2}$ and $v_{3}^{1})\in I_{3}$.
\end{proposition}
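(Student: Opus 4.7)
The plan is to view $\bar a_-$ as a continuous function of $(v_2^1,v_3^1)\in[0,1]^2$ and exhibit a corner of the unit square at which this function is strictly negative; continuity then yields a neighbourhood of that corner on which $\bar a_-<0$, and this neighbourhood can be taken of product form $I_2\times I_3$.

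Concretely, I would define
\[
\Phi(v_2^1,v_3^1):=s_1 s_2 s_3(1-s_3)\,d\bigl(v_2^1 v_3^1+(1-v_2^1)(1-v_3^1)\bigr)-(1-s_2 s_3)\,s_1 c\bigl((v_2^1)^2+(1-v_2^1)^2\bigr),
\]
which coincides with $\bar a_-$ in \eqref{pr5-3} under the homogeneity assumption \eqref{pr5-1}, and note that $\Phi$ is a polynomial in $(v_2^1,v_3^1)$, hence continuous on $[0,1]^2$. The first bracket $v_2^1 v_3^1+(1-v_2^1)(1-v_3^1)$ is minimized (with value $0$) precisely when $v_2^1$ and $v_3^1$ lie at opposite endpoints of $[0,1]$, while the second bracket $(v_2^1)^2+(1-v_2^1)^2$ attains its maximum value $1$ at $v_2^1=0$. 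Evaluating at the corner $(v_2^1,v_3^1)=(0,1)$ therefore gives
\[
\Phi(0,1)=0-(1-s_2 s_3)\,s_1 c = -(1-s_2 s_3)\,s_1 c < 0,
\]
where strict negativity uses $s_1,c>0$ and $s_2 s_3\in(0,1)$.

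By continuity of $\Phi$ on the compact set $[0,1]^2$, there exists $\varepsilon>0$ such that $\Phi(v_2^1,v_3^1)<0$ for every $(v_2^1,v_3^1)\in[0,\varepsilon]\times[1-\varepsilon,1]$. Setting $I_2:=[0,\varepsilon]$ and $I_3:=[1-\varepsilon,1]$ then yields the conclusion. Biologically, the choice of corner corresponds to active adults concentrating in one patch and inactive adults in the other after dispersal equilibrium, which suppresses the synchrony-promoting contribution proportional to $d$ and enhances the asynchrony-promoting contribution proportional to $c$.

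There is no real obstacle: the only subtlety is identifying a corner of $[0,1]^2$ at which the sign of $\Phi$ is unambiguously negative regardless of the parameters, and the pair $(0,1)$ (or symmetrically $(1,0)$) does the job because it simultaneously zeroes the $d$-term and saturates the $c$-term. Any further sharpening of the intervals (e.g.\ replacing the endpoints with open intervals if one insists on $v_i^1\in(0,1)$ from primitivity of the $M_i$) only requires shrinking $\varepsilon$.
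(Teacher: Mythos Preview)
Your proof is correct and essentially identical to the paper's: both exploit continuity of $\bar a_-$ in $(v_2^1,v_3^1)$ and evaluate at a corner of $[0,1]^2$ where the $d$-term vanishes and the $c$-term equals $-(1-s_2s_3)s_1c<0$. The only cosmetic difference is that the paper picks the corner $(1,0)$ while you pick $(0,1)$, a symmetry you yourself note.
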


\begin{proof}
The expression \eqref{pr5-3} of $\bar{a}_{-}$ taken as a function of
$v_{2}^{1}$ and $v_{3}^{1}$, $\bar{a}_{-}(v_{2}^{1},v_{3}^{1})$, is continuous
and satisfies $\bar{a}_{-}(1,0)=-(1-s_{2}s_{3})s_{1}c<0$, what implies the
existence of the intervals $I_{2}$ and $I_{3}$ meeting the required conditions.
\end{proof}

This result proves that the local reproductive synchrony, $a_{-}>0$, can
always be changed into global reproductive asynchrony, $\bar{a}_{-}<0$,
through the appropriate adult dispersal rates. This fact is illustrated in
Figure \ref{fig2}.

\begin{figure}[h]
\centering
{\setlength{\fboxsep}{0pt}\setlength{\fboxrule}{0pt}\fbox{\includegraphics[width=0.9\textwidth]{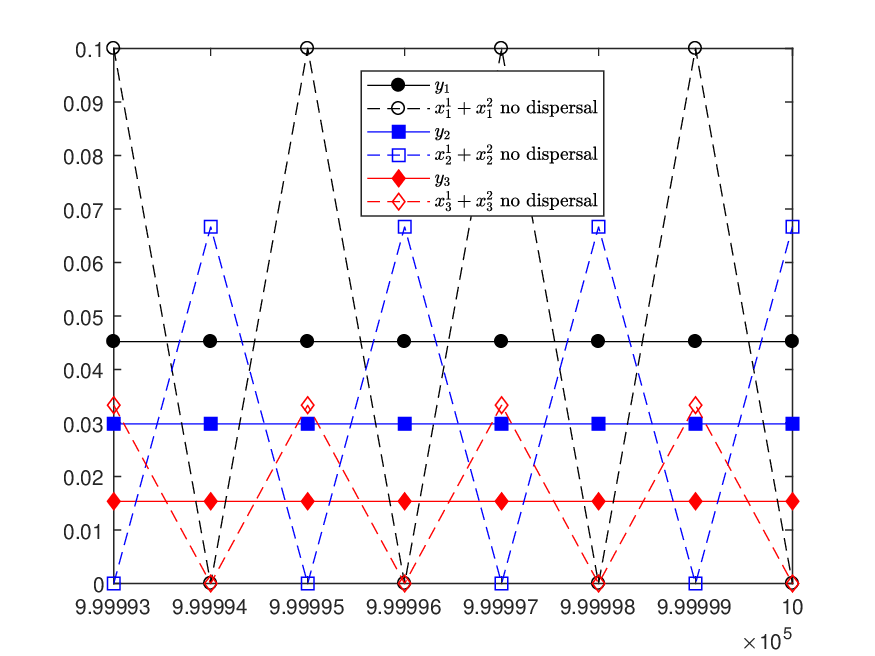}}%
} \caption{Changing local reproductive synchrony into global reproductive asynchrony.
Change from local reproductive synchrony to global reproductive
asynchrony in the adults reproductive behavior when going from the local level
(isolated identical patches that have dynamics defined by matrices
(\ref{eq111})) to the global level (represented by system (\ref{eq112})) when
we incorporate dispersal. \newline Top: parameter values $s_{1}=s_{2}%
=s_{3}=0.5$, $c=1$, $d=10$, $\phi=3.1$, $v_{1}^{1}=0.3$, $v_{2}^{1}%
=7/8,\ v_{3}^{1}=1/8$. \newline Initial conditions are
$X(0)=(0.02,0.02,0.05,0.05,0.02,0.02)$. The simulations have been run until
time $t=10^{6}$ and only the last 8 times are shown. }%
\label{fig2}
\end{figure}

In the opposite sense we present the next result in which we provide
sufficient conditions so that the local reproductive asynchrony, $a_{-}<0$,
can be changed into global reproductive synchrony, $\bar{a}_{-}>0$, if adult
dispersal is adequately chosen.

\begin{proposition}
\label{pr6} Let the model coefficients verify conditions \eqref{pr5-1} and
$a_{-}<0$. Then, there exist intervals $I_{2},I_{3}\subseteq\lbrack0,1]$ such
that $\bar{a}_{-}>0$ for $v_{2}^{1}\in I_{2}$, $,v_{3}^{1}\in I_{3}$ if and
only if
\[
(1-s_{2}s_{3})s_{1}c<\frac{1+\sqrt{2}}{2}s_{1}s_{2}s_{3}(1-s_{3})d.
\]

\end{proposition}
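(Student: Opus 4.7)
The proof reduces to a constrained optimization of $\bar{a}_{-}$ viewed as a continuous function of $(v_{2}^{1},v_{3}^{1})\in[0,1]^{2}$. Introduce the shorthands
\[
A:=s_{1}s_{2}s_{3}(1-s_{3})d>0,\qquad B:=(1-s_{2}s_{3})s_{1}c>0,
\]
so that the hypothesis $a_{-}<0$ becomes $A<B$, and the claimed inequality is equivalent to $A>2(\sqrt{2}-1)B$ (since $1/(2(\sqrt{2}-1))=(1+\sqrt{2})/2$). Writing $u=v_{2}^{1}$, $w=v_{3}^{1}$, expression \eqref{pr5-3} becomes
\[
\Phi(u,w):=A\bigl[uw+(1-u)(1-w)\bigr]-B\bigl[u^{2}+(1-u)^{2}\bigr].
\]
Since $\Phi$ is continuous, the existence of a product of intervals $I_{2}\times I_{3}\subseteq[0,1]^{2}$ on which $\Phi>0$ is equivalent to $\max_{[0,1]^{2}}\Phi>0$: one direction is immediate from the open mapping property at any strict positive, the other from the definition of maximum.

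The first step is to reduce the dimension. For fixed $u$, $\Phi(u,\cdot)$ is affine in $w$ with slope $A(2u-1)$, so its maximum over $[0,1]$ is attained at $w=1$ when $u\ge 1/2$ and at $w=0$ when $u\le 1/2$. The symmetry $(u,w)\mapsto(1-u,1-w)$ leaves $\Phi$ invariant, so without loss of generality I restrict to $u\in[1/2,1]$ and $w=1$, where
\[
\Phi(u,1)=Au-B\bigl[u^{2}+(1-u)^{2}\bigr]=-2Bu^{2}+(A+2B)u-B.
\]
This is a downward parabola with vertex at $u^{*}=(A+2B)/(4B)$. Under the standing assumption $A<B$ one checks $1/2<u^{*}<3/4<1$, so the unconstrained maximizer lies in the feasible interval.

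Plugging $u^{*}$ back gives the maximum
\[
\Phi(u^{*},1)=\frac{(A+2B)^{2}}{8B}-B,
\]
which is strictly positive if and only if $(A+2B)^{2}>8B^{2}$, i.e.\ $A+2B>2\sqrt{2}\,B$, i.e.\ $A>2(\sqrt{2}-1)B$. Rearranging restores the inequality
\[
(1-s_{2}s_{3})s_{1}c<\frac{1+\sqrt{2}}{2}\,s_{1}s_{2}s_{3}(1-s_{3})d
\]
stated in the proposition. When this holds, $\Phi(u^{*},1)>0$ and continuity of $\Phi$ provides an open neighbourhood of $(u^{*},1)$ inside which $\Phi>0$; shrinking this to a product of open intervals yields the desired $I_{2}$ and $I_{3}$. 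Conversely, if the inequality fails then $\Phi\le 0$ on all of $[0,1]^{2}$, so no such intervals exist.

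The only delicate point is verifying that the optimum of $\Phi$ is really captured by a boundary value in $w$ combined with an interior critical point in $u$; everything else is single-variable calculus. The symmetry argument and the check $u^{*}\in(1/2,1)$ using $A<B$ are the pieces that make the $(1+\sqrt{2})/2$ constant emerge cleanly rather than as a boundary artifact, and these are the steps I would execute most carefully.
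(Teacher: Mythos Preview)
Your proof is correct and complete. The approach is closely related to the paper's but organized differently. The paper handles the two implications separately: for sufficiency it simply evaluates $\bar a_{-}$ at the single point $(v_2^1,v_3^1)=(1-\tfrac{\sqrt2}{2},0)$ and checks that the stated inequality forces a positive value (this point is the symmetric image of your $(u^*,1)$ in the limiting case $A=2(\sqrt2-1)B$); for necessity it rewrites $\bar a_{-}>0$ as $B/A< f(u,w):=\dfrac{uw+(1-u)(1-w)}{u^2+(1-u)^2}$ and invokes the fact that $\max_{[0,1]^2}f=(1+\sqrt2)/2$, without deriving this maximum. Your argument instead carries out a full two-variable maximization of $\Phi=A\cdot N-B\cdot D$ (affinity in $w$, symmetry, quadratic in $u$), which simultaneously proves both directions and explains where the constant $(1+\sqrt2)/2$ comes from. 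The paper's route is shorter once one is willing to quote the maximum of $f$, while yours is more self-contained and makes transparent why the optimal $u^*$ lies strictly inside $[1/2,1]$ under the hypothesis $a_-<0$.
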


\begin{proof}
The expression \eqref{pr5-3} of $\bar{a}_{-}$ taken as a function of
$v_{2}^{1}$ and $v_{3}^{1}$, $\bar{a}_{-}(v_{2}^{1},v_{3}^{1})$, is continuous
and satisfies
\[%
\begin{array}
[c]{rcl}%
\bar{a}_{-}(1-\dfrac{\sqrt{2}}{2},0) & = & s_{1}s_{2}s_{3}(1-s_{3}%
)d\dfrac{\sqrt{2}}{2}-(1-s_{2}s_{3})s_{1}c(4-2\sqrt{2})\\
& = & (4-2\sqrt{2})\left(  s_{1}s_{2}s_{3}(1-s_{3})d\dfrac{1+\sqrt{2}}%
{2}-(1-s_{2}s_{3})s_{1}c\right)  >0,
\end{array}
\]
what implies the existence of the intervals $I_{2}$ and $I_{3}$ meeting the
required conditions.

In the opposite sense, if we assume, by contradiction, that
\[
(1-s_{2}s_{3})s_{1}c\geq\dfrac{1+\sqrt{2}}{2}s_{1}s_{2}s_{3}(1-s_{3})d
\]
and $\bar{a}_{-}>0$ then
\[
\dfrac{1+\sqrt{2}}{2}\leq\frac{(1-s_{2}s_{3})s_{1}c}{s_{1}s_{2}s_{3}%
(1-s_{3})d}<\frac{v_{2}^{1}v_{3}^{1}+(1-v_{2}^{1})(1-v_{3}^{1})}{(v_{2}%
^{1})^{2}+(1-v_{2}^{1})^{2}}%
\]
but there are no $v_{2}^{1},v_{3}^{1}\in\lbrack0,1]$ satisfying the previous
inequality since the maximum of function $f(x,y)=(xy+(1-x)(1-y))/(x^{2}%
+(1-x)^{2})$ on $[0,1]\times\lbrack0,1]$ is $(1+\sqrt{2})/2$.
\end{proof}

This change from local reproductive asynchrony to global reproductive synchrony
in the adults reproductive behavior for adequate dispersal parameters is
illustrated in Figure (\ref{fig3})

\begin{figure}[h]
\centering
{\setlength{\fboxsep}{0pt}\setlength{\fboxrule}{0pt}\fbox{\includegraphics[width=0.9\textwidth]{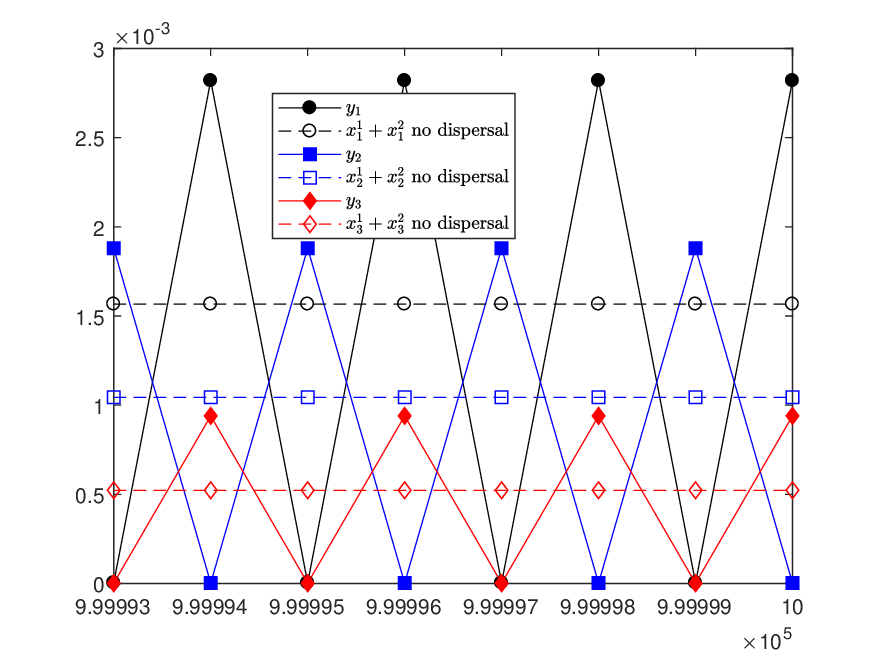}}%
} \caption{Changing local reproductive asynchrony into global reproductive synchrony. Change from local reproductive asynchrony to global reproductive
synchrony in the adults reproductive behavior when going from the local level
(isolated identical patches that have dynamics defined by matrices
(\ref{eq111})) to the global level (represented by system (\ref{eq112})) when
we incorporate dispersal. \newline Parameter values: $s_{1}=s_{2}=s_{3}=0.5$,
$c=1$, $d=5.5$, $\phi=3.0003$, $v_{1}^{1}=0.3,\ v_{2}^{1}=3/8,\ v_{3}^{1}%
=1/8$. \newline Initial conditions are $X(0)=(0.02,0.02,0.05,0.05,0.02,0.02)$.
The simulations have been run until time $t=10^{6}$ and only the last 8 times
are shown. }
\label{fig3}
\end{figure}

The preceding behavior, in which if appropriate adult dispersal rates are
chosen, the existence of a positive locally asymptotically stable equilibrium
locally in each site can be transformed globally into the existence of a
stable 2-cycle and viceversa, is analogous to the one found in
\cite{bravo2016fast} for a semelparous population structured in juveniles and
adults, spread out in two patches between which they can migrate.

\subsection{Differences between models \eqref{mod41} and \eqref{mod42}.}

\label{sec42} In this section we propose some particular simple settings in
which the asymptotic outcomes of systems \eqref{mod41} and \eqref{mod42}
differ, thus showing that the decision of placing mortality at the slow or the
fast time scale can have crucial consequences.

There is a simple case in which the inherent NRNs $\bar{R}_{0}$ and $\tilde
{R}_{0}$, associated to systems \eqref{mod41} and \eqref{mod42}, coincide.
Indeed, if $s_{i}^{1}=s_{i}^{2}=:s_{i}$, for $i=1,2,3$, then
\[
\tilde{R}_{0}=\frac{s_{1}s_{2}\left(  \phi^{1}v_{2}^{1}+\phi^{2}v_{2}%
^{2}\right)  }{1-s_{2}s_{3}}=\bar{R}_{0}.
\]

In the next result we show that equal fertilities, $\phi^{1}=\phi^{2}$, imply
that $\bar{R}_{0}$ is larger than $\tilde{R}_{0}$.

\begin{proposition}
\label{pr7} In systems \eqref{mod41} and \eqref{mod42}, if $\phi^{1}=\phi
^{2}=:\phi$ then
\[
\tilde{R}_{0}<\bar{R}_{0}.
\]
Moreover, there is an interval $I\in\mathbb{R}_{+}$ such that if $\phi\in I $
then
\begin{equation}
\tilde{R}_{0}<1<\bar{R}_{0}. \label{ineq100}%
\end{equation}

\end{proposition}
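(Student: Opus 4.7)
The plan is to reduce the comparison of $\bar{R}_{0}$ and $\tilde{R}_{0}$ to a weighted AM--GM inequality between $\bar{s}_{i}$ and $\tilde{s}_{i}$. Under the hypothesis $\phi^{1}=\phi^{2}=\phi$ and using $v_{2}^{1}+v_{2}^{2}=1$, the formulas defining $\bar{b}$ and $\tilde{b}$ in \eqref{mod41a} and \eqref{mod42a} simplify to $\bar{b}=\phi\,\bar{s}_{2}$ and $\tilde{b}=\phi\,\tilde{s}_{2}$, so from \eqref{r0s}
$$
\bar{R}_{0}=\phi\,\frac{\bar{s}_{1}\bar{s}_{2}}{1-\bar{s}_{2}\bar{s}_{3}},\qquad
\tilde{R}_{0}=\phi\,\frac{\tilde{s}_{1}\tilde{s}_{2}}{1-\tilde{s}_{2}\tilde{s}_{3}}.
$$

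Next, I would invoke the weighted AM--GM inequality: for $p,q>0$ with $p+q=1$ and $a,b>0$, $a^{p}b^{q}\le pa+qb$, with equality if and only if $a=b$. Applied to the definitions of $\tilde{s}_{i}=(s_{i}^{1})^{v_{i}^{1}}(s_{i}^{2})^{v_{i}^{2}}$ and $\bar{s}_{i}=v_{i}^{1}s_{i}^{1}+v_{i}^{2}s_{i}^{2}$, with weights $v_{i}^{1},v_{i}^{2}\in(0,1)$ (strict positivity following from primitivity of $M_{i}$), this yields $\tilde{s}_{i}\le\bar{s}_{i}$ for $i=1,2,3$, with strict inequality in any stage $i$ for which $s_{i}^{1}\ne s_{i}^{2}$ (the case of interest, which the section explicitly targets). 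Combining these with $\bar{s}_{i},\tilde{s}_{i}\in(0,1)$, the numerator $\tilde{s}_{1}\tilde{s}_{2}$ is no larger than $\bar{s}_{1}\bar{s}_{2}$ while the denominator $1-\tilde{s}_{2}\tilde{s}_{3}$ is no smaller than $1-\bar{s}_{2}\bar{s}_{3}>0$, and at least one of these two comparisons is strict; hence $\tilde{R}_{0}<\bar{R}_{0}$.

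For the second assertion, write $\bar{R}_{0}=\phi A$ and $\tilde{R}_{0}=\phi B$ with $A=\bar{s}_{1}\bar{s}_{2}/(1-\bar{s}_{2}\bar{s}_{3})$ and $B=\tilde{s}_{1}\tilde{s}_{2}/(1-\tilde{s}_{2}\tilde{s}_{3})$. Since $0<B<A$ by the first part, the open interval $I:=(1/A,\,1/B)\subset\mathbb{R}_{+}$ is non-empty, and for every $\phi\in I$ one has $\tilde{R}_{0}=\phi B<1<\phi A=\bar{R}_{0}$, which is \eqref{ineq100}.

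The only subtle point is the strictness in $\tilde{s}_{i}\le\bar{s}_{i}$: formally, if the patches were perfectly homogeneous in every survival rate ($s_{i}^{1}=s_{i}^{2}$ for all $i$), the AM--GM step would be an equality and $\tilde{R}_{0}=\bar{R}_{0}$, so the proposition must be read as implicitly excluding this fully degenerate case (consistent with the section's premise that it looks for settings where the two models differ). No other obstacle arises; the rest is algebraic rearrangement.
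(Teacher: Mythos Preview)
Your proof is correct and follows essentially the same route as the paper: simplify $\bar b=\phi\bar s_2$, $\tilde b=\phi\tilde s_2$ under $\phi^1=\phi^2$, apply the weighted AM--GM inequality to get $\tilde s_i\le\bar s_i$, and use the linear dependence on $\phi$ to obtain the interval. You are in fact slightly more careful than the paper, which simply writes the strict inequality $\tilde s_i<\bar s_i$ without flagging that this requires $s_i^1\ne s_i^2$ for some $i$; your explicit identification of the interval $I=(1/A,1/B)$ is also a bit more concrete than the paper's one-line appeal to linearity in $\phi$.
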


\begin{proof}
If we use the fact that, for $i=1,2,3$, we have $v_{i}^{1},v_{i}^{2}\in(0,1) $
and $v_{i}^{1}+v_{i}^{2}=1$, the inequality relating the weighted arithmetic
mean and the weighted geometric mean implies that the survival rates in
\eqref{mod41a} and \eqref{mod42a} verify
\[
\tilde{s}_{i}=\left(  s_{i}^{1}\right)  ^{v_{i}^{1}}\left(  s_{i}^{2}\right)
^{v_{i}^{2}}<s_{i}^{1}v_{i}^{1}+s_{i}^{2}v_{i}^{2}=\bar{s}_{i},
\]
and the fertility coefficients
\[
\tilde{b}=\tilde{s}_{2}\left(  \phi v_{2}^{1}+\phi v_{2}^{2}\right)
=\tilde{s}_{2}\phi<\bar{s}_{2}\phi=\phi s_{2}^{1}v_{2}^{1}+\phi s_{2}^{2}%
v_{2}^{2}=\bar{b}.
\]
Now, it is straightforward that
\[
\tilde{R}_{0}=\frac{\tilde{b}\tilde{s}_{1}}{1-\tilde{s}_{2}\tilde{s}_{3}%
}<\frac{\bar{b}\bar{s}_{1}}{1-\bar{s}_{2}\bar{s}_{3}}=\bar{R}_{0}%
\]
Inequality (\ref{ineq100}) is a direct consequence of both $\tilde{R}_{0}$ and
$\bar{R}_{0}$ depending linearly on $\phi$.
\end{proof}

We stress that choosing one of models \eqref{mod41} and \eqref{mod42} over the
other can represent the difference between population survival or extinction.

We now present a situation in which the inequalities in Proposition \ref{pr7}
are reversed. We assume that $s_{1}^{1}=s_{1}^{2}=s_{1}$, $s_{1}^{2}=0.7$,
$s_{2}^{2}=0.5$, $s_{3}^{1}=s_{3}^{2}=0.8$, $v_{2}^{1}=v_{2}^{2}=0.5$,
$\phi^{1}=\phi$ and $\phi^{2}=\alpha\phi$. Straightforward calculations yield
\[
\tilde{R}_{0}-\bar{R}_{0}=s_{1}\phi\left(  \left(  \frac{5\sqrt{35}%
}{100-8\sqrt{35}}-\frac{25}{52}\right)  \alpha+\frac{5\sqrt{35}}%
{100-8\sqrt{35}}-\frac{35}{52}\right)  ,
\]
therefore, for $\alpha>\dfrac{65\sqrt{35}+14}{289}$ we have $\bar{R}%
_{0}<\tilde{R}_{0}$. As this hold for any values of $s_{1}\in(0,1)$ and
$\phi>0$, there are some of them for which
\[
\bar{R}_{0}<1<\tilde{R}_{0},
\]
which represents the difference between population survival or extinction but
exchanging the associated models in Proposition \ref{pr7}.

It can be shown that, similarly to what happens regarding survival, there
exist situations where models \eqref{mod41} and \eqref{mod42} have different
outputs regarding the population reproductive synchrony. Nevertheless, we skip
this analysis since we consider that the previous results illustrate clearly
our main point here: the choice of model should be as accurate as possible
because the results can drastically differ.

\section{Discussion}

\label{sec5}

In this work we have extended the results on reduction of two time scales non
linear discrete systems presented in \cite{Sanz08}. The new result avoids the
need to check a difficult hypothesis, specifically the uniform convergence on
compacts sets of the differentials of the iterates of a map. The dropping this
hypothesis has an effect on the results that can be given regarding the
relationships between the original and the reduced model, but nonetheless can
be viewed as a minor effect with regard to the practical study of population
dynamics models.

This new result has opened the door to extending to the nonlinear case the
work on re-scaling developed in \cite{Nguyen11}. When there are two processes
acting at different time scales that must be gathered in a single discrete
model, the easiest choice of its time unit is that associated to the slow one.
The structure of the model then reflects that the fast process acts a number
of times, approximately equal to the ratio between the two time scales,
followed by one action of the slow process. It is not always easy to decide if
a process occurs at the slow or the fast time scale. Here we have focussed on
the re-scaling of survival, which is a process usually measured at the slow
time scale associated to the rest of the demographic processes. Nevertheless,
in a context of fast movements of individuals between patches with different
associated survival rates, it can be argued that it should be rather
considered as occurring at the fast time scale. Systems (\ref{mod41}) and
(\ref{mod42}) represent general discrete time models of structured
metapopulations with two time scales; in the first one survival acts at the
slow time scale and in the second one the re-scaled survival acts at the fast
time scale. The reduction results developed in Sections (\ref{sec2}) and
(\ref{sec3}) associated to these two models yield two aggregated models,
(\ref{mod41a}) and (\ref{mod42a}), that contain the asymptotic information of
the original models. They summarize the global emergent properties
\cite{Auger08a,Auger98} that fast dispersal induce out of local demography.

To illustrate this influence of fast dispersal on local demographic dynamics
and the relevance of the choice of time scale for the different processes
involved in a model, we have proposed a particular case of systems
(\ref{mod41}) and (\ref{mod42}). It is based in the model in
\cite{Veprauskas17} and it has three population stages and two patches. The
comparison of the local models and the two global models is done through their
respective inherent net reproduction numbers, that decide on the survival or
extinction of the population, and the coefficients $a_{-}$ that rule, for
viable populations, the tendency to either reproductive synchrony or
asynchrony. Even in simple cases it can be shown that viable local populations
can get globally extinct for adequate dispersal rates and, the other way
round, non viable local populations can globally survive if migrating
appropriately. Along the same lines, different scenarios are presented
reversing the outcome of reproductive synchrony/asynchrony between local and
global dynamics.

Finally, it is shown that there are some cases where changing from system
(\ref{mod41}) to (\ref{mod42}) (or viceversa) can represent changing the
global outcome from survival to extinction or the other way round. This fact
stresses the importance of the choice of time scale (slow ar fast) in which
survival is included in the model.

It must be stressed that the result in Theorem 2 can be applied to much more
general systems than the specific structured metapopulation models presented
in Section 3. Also, the convergence results in Proposition \ref{prop:conv}
have a more general application than the survival re-scaling proposed in this
work. We have chosen the context of the work trying to keep at reasonable
levels both simplicity and modelling relevance.

An interesting extension of the treated metapopulation models would also
include an epidemic disease dynamics together with the demographic and spatial issues.
These more general models would encompass time scales too. A relevant issue at
this point would be to decide whether the epidemic process must be considered
as acting as at the fast or the slow time scale. In case that it be considered
as a slow process together with demography, the reduction of the corresponding
two time scales system would not differ much from what has been developed in
this work since it mainly depends on the fast process. On the other hand, a
general assumption in basic epidemic models is that disease evolution can be
considered almost instantaneous with respect to demography and therefore this
latter is considered negligible. A different approach to this last assumption
is considering in the same model both disease and demographic dynamics at
different time scales. The obtained two-time scale systems would be
susceptible of reduction. The inclusion of disease dynamics in the fast part
of the system would render the reduction procedure more involved, and Theorem
\ref{th1} should be a tool helping in this task.

\appendix

\section{Appendix}

\label{appx}

\begin{lemma}
\label{lema1} Let Hypotheses \ref{H1} and \ref{H3} hold. Then for any
$m\in\mathbb{N}$ we have $\lim_{k\rightarrow\infty}H_{k}^{m}=H^{m}$ uniformly
on compact sets of $\Omega_{N}$.
\end{lemma}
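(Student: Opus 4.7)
The plan is to prove the lemma by induction on $m$, with the base case $m=1$ being exactly Hypothesis \ref{H3}. The inductive step is the whole content of the argument.

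Assume $H_k^m \to H^m$ uniformly on compact subsets of $\Omega_N$, and fix a compact $K \subset \Omega_N$. The natural decomposition is
\[
H_k^{m+1}(X) - H^{m+1}(X) = \bigl[H_k(H_k^m(X)) - H(H_k^m(X))\bigr] + \bigl[H(H_k^m(X)) - H(H^m(X))\bigr],
\]
so I would bound the two bracketed terms separately. For the second bracket, since $H \in \mathcal{C}^1(\Omega_N)$ it is continuous, hence uniformly continuous on any compact subset of $\Omega_N$; combined with the inductive hypothesis that $H_k^m \to H^m$ uniformly on $K$, this gives $\sup_{X \in K}\|H(H_k^m(X)) - H(H^m(X))\| \to 0$. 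For the first bracket, I would locate a compact set $L \subset \Omega_N$ such that $H_k^m(X) \in L$ for all $X \in K$ and all $k$ sufficiently large, and then invoke Hypothesis \ref{H3} to conclude $\sup_{Z \in L}\|H_k(Z) - H(Z)\| \to 0$.

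The construction of $L$ is the step that requires the most care, and I expect it to be the main technical obstacle. Since $H^m$ is continuous, $H^m(K)$ is a compact subset of $\Omega_N$. By the inductive uniform convergence on $K$, there exists $k_0$ such that for every $k \geq k_0$ and every $X \in K$, the point $H_k^m(X)$ lies within distance $1$ of $H^m(K)$. Thus the set
\[
L_0 := H^m(K) \,\cup\, \bigcup_{k \geq k_0} H_k^m(K)
\]
is bounded, contained in $\Omega_N$ (since every $H_k^m$ maps $\Omega_N$ into itself), and its closure is compact in $\mathbb{R}^N$. Taking $L$ to be this closure intersected with $\Omega_N$ (or, in the typical applications where $\Omega_N$ is closed such as $\mathbb{R}_+^{qr}$, simply the closure) gives a compact subset of $\Omega_N$ containing all the iterates we need.

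Once $L$ is fixed, the two pieces combine to give $\sup_{X \in K}\|H_k^{m+1}(X) - H^{m+1}(X)\| \to 0$, completing the induction. The argument is essentially the standard one for propagating uniform convergence through compositions; the only subtlety, as flagged above, is that it relies on the iterates $H_k^m(K)$ remaining within a fixed compact subset of $\Omega_N$ for $k$ large, which follows from the inductive uniform convergence but must be asserted explicitly because Hypothesis \ref{H3} demands compact subsets of $\Omega_N$ rather than arbitrary bounded sets.
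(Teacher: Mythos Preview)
Your proof is correct and follows essentially the same route as the paper: the paper reduces to the case $m=2$ (i.e., the inductive step), uses the identical two-term decomposition, and handles the two terms via uniform convergence of $H_k$ on a compact set containing the images $H_k(M)$ and via uniform continuity of $H$, respectively. Your treatment is in fact more explicit about the construction of the compact set $L$ containing the iterates, a point the paper simply asserts without detail.
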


\begin{proof}
It is easy to realize that it suffices with proving the result for $m=2$.

Let $M\subset\Omega_{n}$ be a compact set. Since $\lim_{k\rightarrow\infty
}H_{k}=H$ uniformly on $M$, we can find another compact set $C\subset
\Omega_{n}$ and $k^{*}\in\mathbb{N}$ such that $H(M)\subset C$ and
$H_{k}(M)\subset C$ for all $k\geq k^{*}$.

Let us now show that $\lim_{k\rightarrow\infty}H_{k}^{2}=H^{2}$ uniformly on
$K$.

The uniform convergence $\lim_{k\rightarrow\infty}H_{k}=H$ on $C$ assures the
existence of a real sequence $\left\{  \alpha_{k}\right\}  _{k\in\mathbb{N}}$,
$\alpha_{k}>0$, with $\lim_{k\rightarrow\infty}\alpha_{k}=0$ and such that
$\sup_{X\in C}\left\Vert H_{k}(X)-H(X)\right\Vert \leq\alpha_{k}$. Since
\[
\left\Vert H_{k}^{2}(X)-H^{2}(X)\right\Vert \leq\left\Vert H_{k}%
(H_{k}(X))-H(H_{k}(X\mathbf{)})\right\Vert +\left\Vert H(H_{k}%
(X))-H(H(X\mathbf{)})\right\Vert ,
\]
we have, for $k\geq k^{\ast}$, that
\[
\underset{X\in M}{\sup}\left\Vert H_{k}^{2}(X)-H^{2}(X)\right\Vert
\leq\underset{Z\in C}{\sup}\left\Vert H_{k}(Z)-H(Z)\right\Vert +\sup_{{\tiny
\begin{array}
[c]{c}%
Z_{k},Z\in C\\
\left\Vert Z_{k}-Z\right\Vert \leq\alpha_{k}%
\end{array}
}}\left\Vert H(Z_{k})-H(Z)\right\Vert .
\]
When $k\rightarrow\infty$, the first term on the right-hand side converges to
zero due to the uniform convergence of $H_{k}$ to $H$ on $C$ and the second
term converges to zero since $H$ is uniformly continuous on $C$. Therefore the
result is proved.
\end{proof}

\begin{proposition}
\label{prop:conv} Let $I,J\in\mathbb{Z}_{+}$ and let $\Omega\subset
\mathbb{R}^{I}$ be an open set. Let $M:\Omega\rightarrow\mathbb{R}^{J\times
J}$ and $S:\Omega\rightarrow\mathbb{R}^{J\times J}$ be continuous maps such that:

a. For all $Y\in\Omega$, $M(Y)$ is a primitive probability matrix. Let $v(Y)$
be its column Perron right eigenvector normalized so that $\mathbf{1}v(Y)=1$.

b. There exists a continuous map $S^{\prime}:\Omega\rightarrow\mathbb{R}%
^{J\times J}$ such that $\exp\left(  S^{\prime}(Y)\right)  =S(Y)$ for all
$Y\in\Omega$.

Let us define, for $Y\in\Omega$, $S_{k}(Y):=\exp\left(  \frac{1}{k}S^{\prime
}(Y)\right)  $, $\bar{M}(Y):=v(Y)\mathbf{1}$ and $\gamma(Y):=\exp\left(
\mathbf{1}S^{\prime}(Y)v(Y)\right)  $. Then we have
\begin{equation}
\lim_{k\rightarrow\infty}\left(  S_{k}(Y)M(Y)\right)  ^{k}=\lim_{k\rightarrow
\infty}\left(  M(Y)S_{k}(Y)\right)  ^{k}=\gamma(Y)\bar{M}(Y)
\label{limite-uniforme}%
\end{equation}
where the limit is uniform on any compact set of $\Omega$.
\end{proposition}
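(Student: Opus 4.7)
The plan is to prove convergence through the spectral decomposition of the primitive matrix $M(Y)$ combined with first-order perturbation theory for a simple isolated eigenvalue. First I would recall that by Perron--Frobenius, for each $Y \in \Omega$ the matrix $M(Y)$ has the decomposition $M(Y) = \bar{M}(Y) + N(Y)$, where $\bar{M}(Y) = v(Y)\mathbf{1}$ is the rank-one spectral projection onto the dominant eigenspace, $N(Y) := M(Y) - \bar{M}(Y)$ satisfies $\bar{M}(Y) N(Y) = N(Y) \bar{M}(Y) = 0$ (using $\mathbf{1}M = \mathbf{1}$, $Mv = v$, $\mathbf{1}v = 1$), and the spectral radius of $N(Y)$ is strictly less than $1$. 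By continuity of $M$ and $v$ and compactness of an arbitrary $K \subset \Omega$, there exists $\rho \in (0,1)$ such that $\rho(N(Y)) \le \rho$ for every $Y \in K$, and $\|\bar{M}(Y)\|, \|N(Y)\|$ are uniformly bounded on $K$.

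Next I would introduce the perturbation $A_k(Y) := S_k(Y) M(Y) = M(Y) + \tfrac{1}{k} R_k(Y) M(Y)$, where $R_k(Y) := k(S_k(Y) - I) = S^\prime(Y) + O(1/k)$. Since $S^\prime$ is continuous, $R_k \to S^\prime$ uniformly on $K$, so $A_k \to M$ uniformly on $K$. By standard analytic perturbation theory for a simple isolated eigenvalue, there exists $k_0$ such that for every $k \ge k_0$ and every $Y \in K$ the matrix $A_k(Y)$ has a simple dominant eigenvalue $\lambda_k(Y)$ close to $1$, a rank-one spectral projection $P_k(Y)$ close to $\bar{M}(Y)$, and a complementary part $A_k(Y)(I - P_k(Y))$ of spectral radius at most $\rho^\prime < 1$, all with convergence uniform in $Y \in K$.

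The central computation is the rate at which $\lambda_k(Y)$ approaches $1$. The first-order perturbation formula applied to the isolated simple eigenvalue $1$ of $M(Y)$, with right eigenvector $v(Y)$ and left eigenvector $\mathbf{1}$ normalized so that $\mathbf{1} v(Y) = 1$, yields
\[
\lambda_k(Y) = 1 + \frac{1}{k} \, \mathbf{1} \, S^\prime(Y) M(Y) v(Y) + O(1/k^2) = 1 + \frac{1}{k} \, \mathbf{1} S^\prime(Y) v(Y) + O(1/k^2),
\]
since $M(Y) v(Y) = v(Y)$, with error uniform in $Y \in K$. Exponentiating,
\[
\lambda_k(Y)^k \longrightarrow \exp\bigl(\mathbf{1} S^\prime(Y) v(Y)\bigr) = \gamma(Y)
\]
uniformly on $K$. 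Writing $A_k(Y)^k = \lambda_k(Y)^k P_k(Y) + (A_k(Y)(I - P_k(Y)))^k$ and noting that the second summand is bounded in norm by $C(\rho^\prime)^k \to 0$ uniformly on $K$, I conclude that $(S_k(Y) M(Y))^k \to \gamma(Y) \bar{M}(Y)$ uniformly on $K$. For the reversed product $(M(Y) S_k(Y))^k$ the same analysis goes through: the first-order perturbation of the eigenvalue is now $\mathbf{1} M(Y) S^\prime(Y) v(Y)$, which equals $\mathbf{1} S^\prime(Y) v(Y)$ because $\mathbf{1} M = \mathbf{1}$, so both orderings yield the same limit.

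The main obstacle is to make the spectral data depend \emph{uniformly} on $Y$: specifically, that $k_0$, the projection $P_k(Y)$, the rate $\rho^\prime$ controlling the complementary spectrum, and the $O(1/k^2)$ remainder in the expansion of $\lambda_k(Y)$ can all be chosen independently of $Y$ on a compact $K$. This can be handled either by invoking Kato's holomorphic perturbation theory (the relevant quantities are analytic in $\epsilon = 1/k$ with coefficients continuous in $Y$, hence uniformly controlled on $K$) or by working more concretely: represent the spectral projection $P_k(Y)$ via the Dunford--Riesz contour integral around the point $1$, use the uniform gap between $1$ and $\mathrm{spec}(N(Y))$ on $K$ to fix a common contour, and then read off the uniform estimates from the resulting closed-form expression.
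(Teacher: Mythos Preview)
Your argument is correct and complete in outline; the spectral-perturbation route you take is sound, and the uniformity issues you flag are exactly the ones that need attention and are indeed handled by the Dunford--Riesz contour-integral representation you propose.

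However, your approach is genuinely different from the paper's. The paper does not use perturbation theory at all. For the pointwise limit it simply invokes Theorem~A1 of \cite{Nguyen11}; for the uniformity on a compact $K$ it argues via direct norm estimates in the matrix $1$-norm. The key observations are that $|\!|\!|M(Y)|\!|\!|=1$ (column-stochastic) and that, after passing to a neighbourhood, $|\!|\!|M(Y)-\bar M(Y)|\!|\!|\le \bar r<1$. The only nontrivial trick is a rescaling: replacing $S$ by $\delta S$ with $\delta=\min_K\exp(-|\!|\!|S'(Y)|\!|\!|)$ forces $|\!|\!|S_k(Y)|\!|\!|\le 1$, whence $|\!|\!|(M(Y)S_k(Y))^i|\!|\!|\le 1$ for all $i,k$, and the estimates from the cited proof carry over verbatim on each neighbourhood; a finite-cover argument gives uniformity on $K$. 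Since $(\,(\delta S)_k M)^k-\gamma_{\delta S}=\delta\bigl((S_kM)^k-\gamma\bar M\bigr)$, the rescaling is harmless.

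What each approach buys: the paper's route is elementary---no eigenvalue perturbation, no resolvent integrals---but it leans on an external reference and on the specific stochastic structure ($1$-norm equal to $1$). Your route is self-contained, makes the appearance of $\gamma(Y)=\exp(\mathbf 1 S'(Y)v(Y))$ transparent as the limit of the perturbed dominant eigenvalue raised to the $k$-th power, and would generalize with little change to primitive matrices that are not stochastic; the cost is invoking Kato/Riesz machinery to make the spectral data uniform in $Y$.
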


\begin{proof}
Given a fixed $Y\in\Omega$, matrix $M(Y)$ is primitive, its (strictly)
dominant eigenvalue is 1 and its associated left eigenvector $u(Y),$
normalized so that $u(Y)^{T}v(Y)=1,$ is $u(Y)=\mathbf{1}$. Then, the existence
of the pointwise limit (\ref{limite-uniforme}) follows as a particular case of
Theorem A1 in \cite{Nguyen11}. Therefore, all we need to prove is that limit
(\ref{limite-uniforme}) is uniform on compact sets of $\Omega$.

We only need to adjust some details at the beginning of the proof of Theorem
A1 in \cite{Nguyen11} to make it work in the present case via standard
compactness arguments. Let $|\rule{-0.4ex}{0ex}|\rule{-0.4ex}{0ex}%
|\ |\rule{-0.4ex}{0ex}|\rule{-0.4ex}{0ex}|$ be the 1-norm in $\mathbb{R}%
^{J\times J}$. In the first place, since the $M(Y)$ are probability matrices
one has
\begin{equation}
|\rule{-0.4ex}{0ex}|\rule{-0.4ex}{0ex}|M(Y)|\rule{-0.4ex}{0ex}%
|\rule{-0.4ex}{0ex}|=1 \label{dem03}%
\end{equation}
for all $Y\in\Omega$. Now let $K\subset\Omega$ be any compact set. For any
$Y^{\prime}\in K$ there exists $r_{Y^{\prime}}<1$ such that
$|\rule{-0.4ex}{0ex}|\rule{-0.4ex}{0ex}|M(Y^{\prime})-\bar{M}(Y^{\prime
})|\rule{-0.4ex}{0ex}|\rule{-0.4ex}{0ex}|<r_{Y^{\prime}}$. The continuity of
$M$ and of the norm imply that if $\bar{r}_{Y^{\prime}}$ satisfies
$r_{Y^{\prime}}<\bar{r}_{Y^{\prime}}<1$, there exists an open neighbourhood of
$Y^{\prime}$ in $\Omega$, $\mathcal{U}_{Y^{\prime}}$, such that
\begin{equation}
|\rule{-0.4ex}{0ex}|\rule{-0.4ex}{0ex}|M(Y)-\bar{M}(Y)|\rule{-0.4ex}{0ex}%
|\rule{-0.4ex}{0ex}|<\bar{r}_{Y^{\prime}}\text{ for all }Y\in\mathcal{U}%
_{Y^{\prime}}. \label{dem01}%
\end{equation}

Let $\delta>0$ and let us denote%
\begin{align*}
\left(  \delta S(Y)\right)  ^{\prime}  &  :=S^{\prime}(Y)+\log\delta I\\
\left(  \delta S(Y)\right)  _{k}  &  :=\exp(\frac{1}{k}\left(  \delta
S(Y)\right)  ^{\prime}),\ \gamma_{\delta S}(Y):=\exp\left(  \mathbf{1}\left(
\delta S(Y)\right)  ^{\prime}v(Y)\right)
\end{align*}
It is immediate to check that $\left(  \delta S(Y)\right)  _{k}=\delta
^{\frac{1}{k}}S_{k}(Y)$ and $\gamma_{\delta S}(Y)=\delta\gamma(Y).$ Therefore%
\[
\left(  \left(  \delta S(Y)\right)  _{k}M(Y)\right)  ^{k}-\gamma_{\delta
S}(Y)=\delta\left(  S_{k}(Y)M(Y)-\gamma(Y)\right)  ^{k}%
\]
so that the limit (\ref{limite-uniforme}) is uniform in $K$ if and only if
there exists $\delta>0$ such that the limit $\lim_{k\rightarrow\infty}\left(
\left(  \delta S(Y)\right)  _{k}M(Y)\right)  ^{k}=\gamma_{\delta S}(Y)$ is
uniform in $K$. Now let $\delta:=\min_{Y\in\mathcal{\bar{U}}_{Y^{\prime}}}%
\exp\left(  -|\rule{-0.4ex}{0ex}|\rule{-0.4ex}{0ex}|S^{\prime}%
(Y)|\rule{-0.4ex}{0ex}|\rule{-0.4ex}{0ex}|\right)  >0$. Then,
\[
|\rule{-0.4ex}{0ex}|\rule{-0.4ex}{0ex}|\left(  \delta S(Y)\right)
_{k}|\rule{-0.4ex}{0ex}|\rule{-0.4ex}{0ex}|=\delta^{\frac{1}{k}}%
|\rule{-0.4ex}{0ex}|\rule{-0.4ex}{0ex}|\exp\left(  \frac{1}{k}S^{\prime
}(Y)\right)  |\rule{-0.4ex}{0ex}|\rule{-0.4ex}{0ex}|\leq1
\]
and so we can assume, without loss of generality, that $|\rule{-0.4ex}{0ex}%
|\rule{-0.4ex}{0ex}|S_{k}(Y)|\rule{-0.4ex}{0ex}|\rule{-0.4ex}{0ex}|\leq1$ for
all $Y\in\mathcal{U}_{Y^{\prime}}$ and all $k=1,2,...$, from where it follows
that
\begin{equation}
|\rule{-0.4ex}{0ex}|\rule{-0.4ex}{0ex}|\left(  M(Y)S_{k}(Y)\right)
^{i}|\rule{-0.4ex}{0ex}|\rule{-0.4ex}{0ex}|\leq|\rule{-0.4ex}{0ex}%
|\rule{-0.4ex}{0ex}|M(Y)|\rule{-0.4ex}{0ex}|\rule{-0.4ex}{0ex}|^{i}%
|\rule{-0.4ex}{0ex}|\rule{-0.4ex}{0ex}|S_{k}(Y)|\rule{-0.4ex}{0ex}%
|\rule{-0.4ex}{0ex}|^{i}=|\rule{-0.4ex}{0ex}|\rule{-0.4ex}{0ex}|S_{k}%
(Y)|\rule{-0.4ex}{0ex}|\rule{-0.4ex}{0ex}|^{i}\leq1 \label{dem02}%
\end{equation}
for all $Y\in\mathcal{U}_{Y^{\prime}}$ and $i,k\in\mathbb{N}.$ Thus, from
(\ref{dem03}), (\ref{dem01}) and (\ref{dem02}) the rest of the proof of
Theorem A1 in \cite{Nguyen11} is valid (in the particular case of working with
the 1-matrix norm) and yields that limit (\ref{limite-uniforme}) is uniform on
$\mathcal{U}_{Y^{\prime}}$. A standard compactness argument ensures the
uniform convergence on $K$ and completes the proof.

\end{proof}

\section*{Competing interests}
The authors declare that they have no competing interests.

\section*{Author's contributions}
    All authors have the same contributions. All authors read and approved the final manuscript.

\section*{Funding}
This work was supported by Ministerio de Economía y Competitividad (Spain), project MTM2014-56022-C2-1-P.
\bibliographystyle{bmc-mathphys}
\bibliography{ADE2019}

\end{document}